\theoremstyle{plain}
\newtheorem{theorem}{Theorem}[section]
\newtheorem{lemma}[theorem]{Lemma}
\newtheorem{proposition}[theorem]{Proposition}
\newtheorem{corollary}[theorem]{Corollary}
\newtheorem*{problem}{Problem}
\theoremstyle{definition}
\newtheorem{definition}[theorem]{Definition}
\begin{document}

\newcommand{\R}{{\mathbb G}}
\newcommand{\pp}{\mathbb{P}}

\renewcommand{\P}{\mathcal P}
\newcommand{\Q}{\mathcal Q}
\renewcommand{\S}{\mathcal S}

\newcommand{\oo}{\mathcal{O}}
\newcommand{\qq}{\mathcal{Q}}
\newcommand{\C}{\mathcal{C}}
\newcommand{\D}{\mathcal{D}}

\newcommand{\sw}{\operatorname{Sw}}
\newcommand{\rev}{\operatorname{Rev}}
\newcommand{\sym}{\operatorname{Sym}}
\newcommand{\aut}{\operatorname{Aut}}
\newcommand{\Max}{\operatorname{Max}}

\newcommand{\Ro}{\mathfrak R}
\newcommand{\Cu}{\mathfrak C}

\title{A new operation on partially ordered sets}

\author[P. P. Pach]{P\'{e}ter P\'{a}l Pach }
\address[1,4]{E\"{o}tv\"{o}s Lor\'{a}nd University, 
          Department of Algebra and Number Theory,
         1117 Budapest,  P\'{a}zm\'{a}ny P\'{e}ter s\'{e}t\'{a}ny 1/c, Hungary}

\email{ppp24@cs.elte.hu}
\email{csaba@cs.elte.hu}

\author[M.~Pinsker]{Michael Pinsker}
   \address[2]{\'{E}quipe de Logique Math\'{e}matique\\ Universit\'{e}
Diderot -- Paris 7\\
       UFR de Math\'{e}matiques\\
       75205 Paris Cedex 13, France}
   \email{marula@gmx.at}
   \urladdr{http://dmg.tuwien.ac.at/pinsker/}
   \thanks{Research of the second author supported by an
APART-fellowship of the Austrian Academy of Sciences.}

\author[A. Pongr\'{a}cz]{Andr\'{a}s Pongr\'{a}cz }
\address[3]{Central European University, Budapest, Hungary}
\email{pongeee@cs.elte.hu}

\author[Cs. Szab\'{o}]{Csaba Szab\'{o}}

\pagebreak

\keywords{poset; rotation; random poset; Fra\"{i}ss\'{e} class; reduct; Seidel switch; graph isomorphism}
\date{\today}

\begin{abstract}
Recently it has been shown that all non-trivial closed permutation
groups containing the automorphism group of the random poset are generated by two types of permutations: the first type are permutations turning
the order upside down, and the second type are permutations induced by so-called
rotations. In this paper we introduce rotations for finite
posets, which can be seen as the poset counterpart of Seidel-switch
 for finite graphs. We analyze some of their combinatorial properties, and investigate in particular the question of when two finite posets are 
rotation-equivalent. We moreover give an explicit combinatorial 
construction of a rotation of the random poset whose image is again isomorphic to the random poset. As an corollary of our results on rotations of finite posets, we obtain that the group of rotating permutations of the random poset is the automorphism group of a homogeneous structure in a finite language.
\end{abstract}

\maketitle

\section{Operations from generic objects}
 Switching of a graph  was introduced by van Lint and
Seidel   in connection with a problem of finding
equilateral $n$-tuples of points in elliptic geometry in 1966  \cite{LiSe}.
The operation of \emph{switch} or  \emph{Seidel-switch} on a graph  with respect to
a set $X$ of vertices works as
follows: interchange edges and non-edges between
$X$ and its complement, leaving edges within and
outside $X$ unaltered. Two graphs are called \emph{switch-equivalent} iff one
can be obtained from the other one up to isomorphism by application of a Seidel-switch \cite{seidel1}.

Since then the switch operation on graphs has
gained applications in several areas of mathematics.
In   
\cite{Ser} the Seidel-switch is applied
 to construct  cospectral non-isomorphic graphs; for more details on the   role of the Seidel-switch in the
theory of spectra of graphs see the monograph \cite{BrA}.
 Switches are also used in 
 the classification of root systems and Weyl-groups \cite{CaSe}, or
 in coding theory  \cite{Krat}. 
 In \cite{JoKl}
 new strongly regular and amorphic association schemes are constructed
 using switch operations. In  geometry  isomorphic invariants
for translation planes are defined via the Seidel-switch  \cite{Moor}.
Several authors have considered the complexity of deciding whether or
not a given 
graph can be switched to a graph 
having some particular property. 
Polynomial-time algorithms are known for switching to a triangle-free
graph \cite{Hay96, HHW02}, to a 
claw-free graph \cite{JK08}, to an Eulerian graph \cite{HHW02}, to a bipartite
graph \cite{HHW02}, and to a planar 
graph \cite{EHHR00a, Kra03}.
On the hard side,
Kratochvil \cite{Kra03} has shown NP-completeness 
of deciding whether or not a given graph can be switched to a regular one.
In  \cite{eop} the complexity of  parameterized problems related to
switching are investigated.
 It is also proved  in \cite{KratNes} that deciding whether or not
two given finite graphs are switch-equivalent is isomorphism-complete. 

A priori the switch operation might look quite arbitrary among all operations that one could imagine on graphs, and its popularity surprising, but its special role is reflected in the following result from \cite{thomas} concerning the \emph{random graph}. The random graph $\R$ \cite{erdr} is the unique countably infinite graph which is  \emph{universal} in the sense that any finite  graph is isomorphic with an induced subgraph of $\R$ and which is \emph{homogeneous},
that is,  any isomorphism between
finite induced subgraphs of   $\R$ extends to an
automorphism of  $\R$. The random graph 
 is obtained as the \emph{Fra\"{\i}ss\'e limit} of the class of all finite
graphs, and can thus be seen as the generic object representing the class
of finite graphs (see e.g. \cite{fra}). Moreover, if the
edges of a countably infinite  
graph are chosen independently with probability $\frac{1}{2}$, then this graph is  isomorphic to  $\R$   with probability 1. It is not difficult to see that if one applies the Seidel-switch to $\R$ with respect to a finite subset of $\R$, then the resulting graph is isomorphic to $\R$. Hence, there is a permutation on $\R$ witnessing this isomorphism; call such permutations \emph{switching}.
The mentioned result from \cite{thomas}, due to Thomas, states the following.
 
 \begin{theorem}[from~\cite{thomas}]\label{thom} The closed
subgroups of the full symmetric group $\sym (\R)$ on $\R$ which contain the automorphism group $\aut (\R)$ of $\R$ are precisely the following.
\begin{enumerate}
\item $\aut(\R)$;
\item the group of automorphisms and anti-automorphisms of $\R$;
\item the group of switching permutations of $\R$;
\item the join of (2) and (3); 
\item $\sym(\R)$.
\end{enumerate}
\end{theorem}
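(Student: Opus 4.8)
The plan is to exploit the Galois correspondence between closed supergroups of $\aut(\R)$ and reducts of $\R$. Since $\R$ is $\omega$-categorical, a closed group $G$ with $\aut(\R)\le G\le\sym(\R)$ is the automorphism group of the relational structure whose relations are the $G$-invariant first-order definable relations, and two such groups coincide iff they preserve the same relations. So it suffices to determine, up to first-order interdefinability, which relations can be invariant. Three distinguished invariants organise the picture: $\aut(\R)$ is pinned down by the edge relation $E$; the group of automorphisms and anti-automorphisms by the $4$-ary relation $E(x,y)\leftrightarrow E(u,v)$ (equal adjacency of two pairs), which complementation preserves but a switch does not; and the switching group by the ternary two-graph relation $T(x,y,z)$ asserting that an odd number of the pairs $xy$, $xz$, $yz$ are edges, which is exactly the invariant of a Seidel switch. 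The remaining two groups are then forced: the join of (2) and (3), and the top group $\sym(\R)$, which preserves only trivial relations.

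I would first reduce the whole theorem to two ``atomicity'' statements: (a) every closed group properly containing $\aut(\R)$ contains an anti-automorphism or a switching permutation; and (b) every closed group properly containing the join of (2) and (3) equals $\sym(\R)$. Granting (a) and (b), a short lattice computation identifies the five groups: the minimal proper supergroups of $\aut(\R)$ are (2) and (3), their join is (4), and above (4) lies only (5).

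The engine for (a) and (b) is Ramsey theory. I fix a generic linear order $\prec$ on $\R$ so that $(\R,\prec)$ is the Fra\"{\i}ss\'e limit of finite ordered graphs; by Ne\v{s}et\v{r}il--R\"odl this class is Ramsey. Given a permutation $g$ witnessing that $G$ is strictly larger than the group at hand, a diagonalisation using the Ramsey property lets me pass to a copy of $(\R,\prec)$ on which $g$ is \emph{canonical}: the adjacency of the image pair $(g(x),g(y))$ depends only on the order-edge type of $(x,y)$. There are only finitely many canonical behaviours on $2$-types, and I would enumerate them. A behaviour preserving $E$ throughout forces $g$ to coincide on the copy with an automorphism, contradicting the choice of $g$; a behaviour reversing $E$ throughout yields, via closedness and homogeneity, an anti-automorphism; and the remaining order-sensitive behaviours are exactly those realized by a Seidel switch and yield a switching permutation. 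For step (b) one checks that every canonical behaviour already lies in group (4), so that a permutation outside (4) must be non-canonical on arbitrarily large sets in a way that, again by Ramsey and interpolation, generates all of $\sym(\R)$.

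The delicate point, and the main obstacle, is the canonical-behaviour analysis together with the passage from a behaviour on a single copy to a genuine element of the \emph{closed} group $G$. Producing the limit permutation requires interpolating the canonical behaviour by elements of $G$ on finite sets and invoking homogeneity of $\R$ together with the topology of pointwise convergence; and one must verify that the finitely many admissible behaviours are precisely those generating automorphisms, anti-automorphisms, switches, and their combinations, with no exotic behaviour surviving. This case analysis, rather than the formal lattice bookkeeping, is where the real content of Thomas's theorem resides.
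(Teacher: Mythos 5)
This theorem is not proved in the paper at all: it is imported verbatim from Thomas's article \cite{thomas} and used as motivation, so there is no in-paper argument to compare yours against. Your outline follows the later Ramsey-theoretic ``canonical functions'' route (in the style of Bodirsky--Pinsker--Tsankov) rather than Thomas's original 1991 argument, which classified the closed supergroups by a direct combinatorial analysis of which finitary relations they can preserve, without invoking the Ne\v{s}et\v{r}il--R\"odl Ramsey theorem. Your choice of distinguishing invariants is correct: the $4$-ary ``same adjacency'' relation is preserved by complementation but not by switching, and the two-graph relation $T(x,y,z)$ (odd number of edges among the three pairs) is preserved by every Seidel switch, since a switch flips exactly $0$ or $2$ of the three pairs of any triple.

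The genuine gap is that everything you call ``the delicate point'' is precisely the theorem, and you leave it undone. Reducing to statements (a) and (b) and then invoking canonicity is a plan, not a proof: you would need (i) the canonization lemma, i.e.\ that any $g$ witnessing $G\supsetneq\aut(\R)$ yields, via Ramsey extraction, topological closure and compositions with automorphisms, a function in $G$ that is canonical as a map from $(\R,\prec,c_1,\dots,c_n)$ with finitely many constants added --- the version without constants is not enough to rule out exotic behaviour; (ii) the explicit finite enumeration of all canonical behaviours on ordered $2$-types over such expansions, with a verification that each one generates, together with $\aut(\R)$, one of the five listed groups; and (iii) the argument that a behaviour not preserving the two-graph relation $T$ generates all of $\sym(\R)$ (typically by showing one can realize arbitrary finite partial injections). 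None of these is routine, and (ii) in particular is a nontrivial case analysis where the constants create behaviours (e.g.\ edge-flipping only above a parameter) that must be shown to reduce to switches. As written, the proposal correctly identifies the five groups and the invariants separating them, but does not establish that the list is exhaustive, which is the content of the statement.
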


 This theorem can be interpreted as follows: just like $\aut(\R)$ is the group of all symmetries of $\R$, the closed permutation groups containing $\aut(\R)$ stand for all symmetries of $\R$ if we are willing to give up some of the structure of $\R$. For example, it is obvious that flipping edges and non-edges of $\R$ yields a graph isomorphic with $\R$; this symmetry is reflected by one of the closed groups containing $\aut(\R)$, namely the group of all automorphisms and anti-automorphisms of $\R$. Now the theorem implies that the generic graph $\R$ has only one other symmetry of this kind -- the one reflected by the Seidel-switch.
 
 In this paper we initiate the investigation of the analogue of the
 Seidel-switch for partial orders. Similarly to the situation with
 graphs, there exists a generic partial order $\pp$, called the
 \emph{random partial order}, which represents the class of finite
 partial orders. More precisely, $\pp$ is the Fra\"{\i}ss\'e limit of
 the class of all finite partial orders, or the unique countable
 partial order which is homogeneous and universal for the class of
 finite partial orders. 
In the companion paper \cite{ppppsz} it is shown that this generic order has only two symmetries in the above sense: one symmetry is given by reversing the order, and the second one by so-called \emph{rotations} (see Definition~\ref{defrot}).

\begin{theorem}[from \cite{ppppsz}]\label{reductok}   The closed
subgroups of $\sym (\pp)$ containing $\aut (\pp)$ are precisely the following.
\begin{enumerate}
\item $\aut(\pp)$;
\item the group of automorphisms   and anti-automorphisms of $\pp$;
\item the group of rotating permutations  of $\pp$;
\item the join of (2) and (3);
\item $\sym(\pp)$.
\end{enumerate}
\end{theorem}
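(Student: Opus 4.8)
The plan is to prove the classification through the modern theory of reducts of homogeneous structures, exploiting that $\pp$ is $\omega$-categorical (the class of finite posets has, up to isomorphism, only finitely many members of each finite size, so there are finitely many $n$-types) and homogeneous. By the Galois correspondence between closed supergroups of $\aut(\pp)$ and reducts of $\pp$ up to first-order interdefinability, it suffices to show that every closed group $G$ with $\aut(\pp)\subsetneq G\subseteq\sym(\pp)$ coincides with one of (2)--(5). First I would record the easy containments: reversal, the map reading $\pp$ in the opposite order, is an anti-automorphism, so (2) is closed; a rotating permutation exists and is neither an automorphism nor an anti-automorphism, so (3) is closed and incomparable with (2); and (4), (5) are their join and the top. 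The whole difficulty lies in the converse, that nothing else occurs.

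The engine is Ramsey theory. I would pass to the ordered expansion $(\pp,\prec)$, the Fra\"{\i}ss\'e limit of finite posets carrying a generic linear extension; its age is a Ramsey class (by the Ne\v{s}et\v{r}il--R\"odl theorem in the form valid for posets with linear extensions). Hence $(\pp,\prec)$ is a homogeneous ordered Ramsey structure and the canonisation machinery of Bodirsky--Pinsker applies: given $g\in G\setminus\aut(\pp)$, the smallest closed group containing $\aut(\pp)\cup\{g\}$ contains a permutation that is \emph{canonical} with respect to $(\pp,\prec)$, i.e.\ whose action on $n$-types depends only on the $\prec$-type of the argument. The problem then reduces to a finite classification of canonical permutations of $(\pp,\prec)$ modulo $\aut(\pp)$, together with the determination of the closed group each generates.

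At the level of $2$-types transitivity is already decisive: a short check over the $19$ labelled posets on three points shows that the only permutations of the relation set $\{<,>,\perp\}$ sending every finite poset to a poset are the identity and the swap $<\,\leftrightarrow\,>$; the two transpositions moving $\perp$ and the two $3$-cycles each force a directed cycle or a transitivity violation. Consequently a canonical permutation that \emph{is} determined on $2$-types can only be an automorphism or an anti-automorphism, accounting for (1) and (2), and any genuinely new symmetry must be invisible on $2$-types. This is exactly the situation of the Seidel switch, whose invariant is the ternary two-graph rather than any edge-behaviour; I would therefore classify the canonical behaviours on triples (and, if needed, larger configurations), showing that the only ones surviving transitivity are those preserving a finite relational signature, namely the homogeneous structure $\Ro$ identified in the body of this paper whose automorphism group is precisely (3) -- giving rotations, and with reversal their join -- together with behaviours that destroy all relational information. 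The latter I would show generate $\sym(\pp)$ by combining canonicity with the universality of $\pp$ to realise an arbitrary injection on any finite set, then invoking closedness.

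The main obstacle is this canonical-behaviour classification under transitivity: in contrast to the relation-free bookkeeping available for the random graph, every candidate behaviour must here be verified to send posets to posets, and the rotation invariant $\Ro$ must be pinned down exactly rather than guessed -- a naive cyclic shift of $\{<,>,\perp\}$, for instance, already fails. The second delicate point is separation and generation: proving that the rotation group lies \emph{properly} between $\aut(\pp)$ and $\sym(\pp)$, is not contained in the group of (anti-)automorphisms, and together with reversal generates exactly (4). Both points feed on the combinatorics of rotations of finite posets developed here, which supplies the invariant $\Ro$ cutting out group (3) and an explicit rotation of $\pp$ onto a copy of itself witnessing its existence, while the Ramsey--canonical argument supplies completeness.
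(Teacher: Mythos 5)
This theorem is not proved in the present paper at all: it is quoted from the companion paper \cite{ppppsz}, and the only contribution this text makes towards it is Theorem~\ref{thm:constr}, which separates group~(3) from $\aut(\pp)$. So there is no in-paper proof to compare you against; I can only measure your outline against the strategy of the cited companion work, which it does correctly identify in broad strokes: $\omega$-categoricity, the Galois correspondence between closed supergroups of $\aut(\pp)$ and reducts, the Ramsey property of finite posets with linear extensions, and the Bodirsky--Pinsker canonical-function machinery over the generic linear extension $(\pp,\prec)$.

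As a proof, however, the proposal has gaps exactly where the work lies. First, the canonisation lemma does not hand you a canonical \emph{permutation} in the closed group generated by $\aut(\pp)\cup\{g\}$; it yields a canonical \emph{injection} in the closure of the set $\{\beta g\alpha\}$, and only after adjoining finitely many constants, i.e.\ canonicity is with respect to $(\pp,\prec,c_1,\dots,c_n)$. Passing from monoid generation back to group generation and eliminating the parameters are genuine steps your sketch omits. Second, the rotating permutations are \emph{not} canonical over $(\pp,\prec)$ without parameters: a rotation sends a pair $x<y$ to $<$, $>$ or $\perp$ depending on which blocks $A,B,C$ contain $x$ and $y$, so the slogan that any new symmetry must be ``invisible on $2$-types'' misdescribes how group~(3) arises -- it arises as canonical behaviour relative to one added constant, which is precisely the one-point-extension picture of Proposition~\ref{rotext}. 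Third, your $2$-type analysis considers only \emph{bijections} of $\{<,>,\perp\}$, but a canonical injection may merge types (for instance, the behaviour sending each incomparable pair to a comparable one according to $\prec$ maps every finite poset to a poset, namely a linear extension); the correct analysis runs over all maps on the four $\prec$-$2$-types, and the non-injective behaviours are exactly those that must be shown to generate $\sym(\pp)$. Finally, the decisive classification of canonical behaviours with parameters -- showing each either preserves $(P,\mathcal{O}_1,\mathcal{O}_2,\mathcal{O}_3)$ or generates $\sym(\pp)$ -- is the converse inclusion, hence the whole theorem, and you explicitly defer it as ``the main obstacle''. In short: right machinery, but the proof is the part that has been postponed.
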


Any function on the generic partial order yields functions on all finite partial orders by restriction -- in
particular, by restricting rotating permutations of the generic partial order, we obtain a
notion of rotation for finite partial orders. The study of such
rotations of finite posets is worthwhile; the very same combinatorial
questions which have been considered for the Seidel-switch over the years do
make perfect sense for these new operations.  

We introduce the notion of a rotation of a poset and obtain several combinatorial results concerning rotations of finite posets. We first show that rotations can be decomposed into so-called \emph{cuts}, and that rotations of finite posets can be further decomposed into cuts with respect to a single element.
  We then give a  
description of \emph{rotation-equivalent} finite posets via their three-element subsets and maximal elements, and further investigate  the equivalence classes of
rotation equivalence, in particular the 
classes of chains and antichains.  We show that 
 deciding for two given finite posets $\P, \Q$ whether $\P$ is rotation-equivalent to a poset isomorphic with $\Q$ is graph-isomorphism complete. In Section~\ref{sect:randompo} we turn to rotations of $\pp$. Using a one-point extension of $\pp$ we explicitly construct a non-trivial rotation of the random poset $\pp$ whose image is isomorphic to $\pp$, thus showing that group~(3) of Theorem~\ref{reductok} properly contains $\aut(\pp)$. We also show that rotations of finite posets are precisely the  restrictions of rotating permutations of $\pp$. Finally, as an application of our results on rotations of finite posets, we obtain that the group of rotating permutations of $\pp$ is the automorphism group of a homogeneous relational structure with one ternary relation.

\section{Notation}

Before starting our investigations we introduce some notation concerning partially 
ordered sets. Let $(P,\leq)$   be a poset. For $x,y\in P$  we write $x<y$ iff
$x\leq y$ and  $x\neq y $; $x\geq y$ iff $y\leq x$; and $x \perp y$ iff $x\nleq y$ and $y\nleq x$. For $X,Y\subseteq P$  we write $X<Y$ iff for
all 
$x\in X$ and all $y\in Y$ we have $x<y$. Instead of $\{x\}<Y$ we write
$x<Y$. The set of maximal elements of  $(P,\leq)$ is denoted by $\max
(P,\leq)$. For two posets with disjoint domains $P$ and $Q$ let $P\oplus Q$ denote  the \emph{linear
sum} of $P$ and $Q$, i.e., the poset with domain $P\cup Q$ whose order is defined in the
following way:  the respective orders on $P$ and on $Q$ are
kept, and $P<Q$. A subset $X\subseteq P$ is a \emph{downset}
iff for any $x\in X$ and $y\in P$ if $y<x$, then $y\in X$. An
\emph{up-set} is defined dually. When $X\subseteq P$, then we write $(X,\leq)$ for what really is $(X,X^2\cap \leq)$.

\section{Rotations}

\subsection{Defining rotations and cuts} Seidel's switch assigns a graph to a graph.  
 Rotations and cuts  assign a poset to a poset.

\begin{definition}\label{defrot} Let $\P=(P,\leq)$ be a poset, and let $A$ and $C$
  be disjoint subsets of $P$ such that $A$ is a  downset, $C$ is an
  up-set and $A<C$.  Let $B:=P\setminus(A\cup C) $. 
We assign to $\P$ a new structure 
$\Ro_{A,C}(\P)=(P,\preceq)$ by setting  $x\preceq y$ if and only if one of the following hold:
\begin{itemize}
\item $x,y\in A$ or $x,y\in B$ or $x,y\in C$, and $x\leq y$;
\item $x\in B,y\in A$ and $  x\perp y$ in $(P,\leq)$;
\item $x\in C$ and $y\in A$;
\item $x\in C$, $y\in B$ and $ x\perp y$ in $(P,\leq)$.
\end{itemize} 
 The assignment $\Ro_{A,C}$ is called a \emph{rotation}. If $C$
is empty, then $\Ro_{A,\emptyset}=:\Cu_A$ is called a \emph{cut}. If
$A=\{a\}$ for some $a\in P$ then we write $\Cu_{a}$ for $\Cu_{\{a\}}$.

\end{definition}
In what follows, when we write $\Ro_{A,C}$ or $\Cu_A$, we shall
assume that $A$ and $C$ satisfy the conditions of
Definition~\ref{defrot}. 
Let us first observe that under these conditions, rotations of posets yield posets.

\begin{proposition} Let $\P=(P,\leq)$ be a poset, and let $A,C\subseteq P$ be as in Definition~\ref{defrot}. Then $\Ro_{A,C}(\P)$ is a poset. In
  particular, $\Cu_{A}(\P)$ is a poset.
\end{proposition}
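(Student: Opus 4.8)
The plan is to verify directly that the relation $\preceq$ of $\Ro_{A,C}(\P)$ is reflexive, antisymmetric and transitive; the assertion for $\Cu_A$ then follows immediately, since $\Cu_A=\Ro_{A,\emptyset}$. Reflexivity is free: for each $x\in P$ the pair $x,x$ lies in a single one of the blocks $A,B,C$ and $x\le x$, so the first clause of Definition~\ref{defrot} gives $x\preceq x$. The engine of the whole argument will be a \emph{level function} $w\colon P\to\{0,1,2\}$ with $w(x)=0$ for $x\in C$, $w(x)=1$ for $x\in B$, and $w(x)=2$ for $x\in A$. Reading off the four clauses of Definition~\ref{defrot}, one sees that whenever $x\preceq y$ with $x,y$ in distinct blocks (clauses two, three, four) the level strictly increases, $w(x)<w(y)$, while within a block $w(x)=w(y)$. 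Hence $x\preceq y$ always forces $w(x)\le w(y)$, with equality exactly when $x$ and $y$ share a block.

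Antisymmetry is then almost immediate. Suppose $x\preceq y$ and $y\preceq x$ with $x\neq y$. If $x$ and $y$ lay in different blocks, monotonicity of $w$ would give both $w(x)<w(y)$ and $w(y)<w(x)$, a contradiction; so they share a block, where $x\preceq y$ and $y\preceq x$ unwind to $x\le y$ and $y\le x$ in the original order, whence $x=y$ by antisymmetry of $\le$.

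The main work, and the only real obstacle, is transitivity. Assume $x\preceq y\preceq z$. By monotonicity of $w$ the triple of blocks of $(x,y,z)$ is non-decreasing in level, which leaves exactly ten block-patterns. I would group them as follows. If all three points lie in one block, $x\preceq z$ is just transitivity of $\le$ inside that block. If the conclusion is of the form ``$x\in C$ and $z\in A$'', it holds automatically by the third clause, so every pattern of shape $(C,\ast,A)$ is trivial. This leaves the genuine cases $(C,C,B)$, $(C,B,B)$, $(B,B,A)$ and $(B,A,A)$, in which the target relation $x\preceq z$ is a cross-block relation requiring, by the second or fourth clause, that $x\perp z$ in $(P,\le)$. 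This is precisely where the hypotheses that $A$ is a down-set and $C$ is an up-set are consumed. For example, for the pattern $(B,B,A)$ one has $x\le y$ and $y\perp z$; since $z\in A$ and $A$ is a down-set, $x<z$ would place $x\in A$, which is false, so $x\not\le z$, while $z\le x\le y$ would give $z\le y$, contradicting $y\perp z$, so $z\not\le x$; hence $x\perp z$ and $x\preceq z$ follows. The remaining three cases are settled by the same pair of observations --- invoking that $C$ is an up-set in the two patterns starting with $C$ --- together with transitivity of $\le$.

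I expect the level function $w$ to be the decisive simplification: it reduces the naive $3^3=27$ block-cases for transitivity to ten and settles antisymmetry in one line. The point demanding genuine care is the preservation of incomparability in the four mixed patterns above, and checking that this holds is exactly the step that uses the down-set and up-set assumptions on $A$ and $C$.
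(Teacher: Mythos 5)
Your proof is correct and follows essentially the same route as the paper's: a direct verification of the poset axioms with a case analysis on the blocks $A,B,C$ for transitivity, using that $A$ is a down-set and $C$ an up-set to preserve incomparability in the mixed cases. The level function $w$ is a pleasant bookkeeping device that the paper does not use (it instead observes that $A$ is an up-set and $C$ a down-set in $\Ro_{A,C}(\P)$ to prune cases), but the substance of the argument is the same.
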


\begin{proof}
Reflexivity and antisymmetry of $\preceq $ follow easily from
Definition~\ref{defrot}. To check transitivity suppose that $x\preceq
y\preceq z$. If $x,y$ and $z$ belong to the same set in the
partitioning $P=A\cup B\cup C$, then $\preceq $ and $\leq $ are equal, hence
$x\preceq  z$. 
If ${x\in A}$, then $y,z\in  A$, as $A$ is an up-set in $\Ro_{A,C}(\P)$. Hence
 $x\preceq z$.  
Now suppose that ${ x\in B}$. 
If $z\in B$ as well, then $y\in B $ and     $x\preceq  z$ is proved. 
If $y\in B$ and  $ z\in A$, then  $y\perp z$ in $\P$. Now, $x\geq z$
would imply $y\geq z$ by the transitivity of $\leq$, hence  $x\perp z$ in $\P$,
 so  $x\preceq  z$. If $y, z \in A$, then $x\perp y$ in $\P$, hence
$x\perp z$  as before and  $x\preceq  z$ holds, again.
Finally, let ${x\in C}$. If $z\in C$ or  $z\in A$, then
$x\preceq z$ is trivial. The case $z\in B$ is handled as
the case $x\in B$.

\end{proof}

\subsection{Rotations via one-point extensions}

The following proposition shows that any rotation of a poset $\P$ corresponds to a certain operation on a one-point extension of $\P$.

\begin{proposition}\label{rotext} Let $\P=(P,\leq)$ be a poset.

\begin{itemize}
\item Consider any 
  one-point extension of $\P$ with domain $P\cup\{a\}$. Set $A:=\{x\in P\,|\, x<a     \}$,   $C:=\{x\in P\,|\, x>a
  \}$ and $B:=\{x\in P\,|\, x\perp a \}$. Then the subsets $A,B,C$ of $P$ satisfy the conditions of Definition~\ref{defrot} for $\P$, and so the rotation  $\Ro_{A,C}$ of $\P$ is defined.
  \item
  Conversely, whenever subsets $A,B,C$ of $P$ satisfy the conditions of Definition~\ref{defrot}, and hence the rotation
  $\Ro_{A,C}$ of $\P$ is defined, then these sets can be defined in this way via a
   one-point extension to domain $P\cup\{a\}$ by setting 
  $a<C$, $a>A$ and $a\perp B$.
  \end{itemize}
\end{proposition}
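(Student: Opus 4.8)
The plan is to treat the two bullets separately, in both cases reducing everything to transitivity of the order. Throughout, recall that the conditions of Definition~\ref{defrot} on a partition $P=A\cup B\cup C$ amount to: $A$ is a downset, $C$ is an upset, and $A<C$.

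For the first bullet, I start from a one-point extension with domain $P\cup\{a\}$ and the three sets $A,B,C$ defined there. These sets are pairwise disjoint and cover $P$, since for each $x\in P$ exactly one of $x<a$, $x>a$, $x\perp a$ can hold (as $x\neq a$). To see that $A$ is a downset in $\P$, take $x\in A$ and $y\in P$ with $y<x$; since $x<a$, transitivity in the extension gives $y<a$, i.e. $y\in A$. The claim that $C$ is an upset is dual. Finally, if $x\in A$ and $y\in C$, then $x<a<y$ forces $x<y$, so $A<C$. Thus the hypotheses of Definition~\ref{defrot} hold and $\Ro_{A,C}$ is defined.

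For the converse, I define the extension on $P\cup\{a\}$ by keeping $\leq$ on $P$ and declaring $x<a$ for $x\in A$, $a<y$ for $y\in C$, and $a\perp b$ for $b\in B$. Reflexivity and antisymmetry are immediate, so the crux is transitivity for triples involving $a$. When $a$ is the middle element, one has some $x\in A$ below $a$ below some $y\in C$, and $A<C$ supplies $x<y$. When $a$ is the top (respectively bottom) element of the triple, the remaining relation lies inside $P$ and the required conclusion reads $u\in A$ (respectively $w\in C$), which follows because $A$ is a downset (respectively $C$ is an upset). Once transitivity is established, reading off the three sets from this extension via the formulas of the first bullet recovers exactly $A,B,C$, as desired.

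The point requiring care --- and the only place where the full force of the hypotheses is used --- is checking that the declared incomparabilities $a\perp b$ for $b\in B$ are not destroyed by transitivity: no chain of relations may force $a$ above or below some $b\in B$. A forced relation $b<a$ would require $b<x$ for some $x\in A$, which is impossible since $A$ is a downset and $b\notin A$; symmetrically, a forced relation $a<b$ would require $b\in C$, again impossible. Hence the extension is a genuine poset and the correspondence is exactly as stated.
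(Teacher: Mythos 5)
Your proof is correct and follows essentially the same route as the paper's: the first bullet is deduced from transitivity of the extension, and the second is verified by checking transitivity of the declared one-point extension case by case according to the position of $a$ in the triple (your closing paragraph on preserved incomparabilities is a correct but redundant restatement of the cases where $a$ is the top or bottom element). No gaps.
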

\begin{proof}
The first part of the statement is implied by the fact that $A$ is a
downset, $C$ is an up-set, and that $A<x<C$ implies $A<C$. Hence the conditions
of a rotation are satisfied. For the second part we need to show that
we obtain a poset this way.  To check transitivity we only need to
examine those triples that contain $a$. If $x<y<a$, then $x\in A$,
hence $x<a$. If $x<a<z$, then  $x\in A$ and $z\in C$,
hence by the assumptions $x<z$.  If $a<y<z$, then $z\in C$,
hence $x<z$.
\end{proof}

By the preceding proposition, the ``rotated'' sets $A$, $B$ and $C$ of a rotation of a partial order $(P,\leq)$ can be imagined as being definable in a one-point extension with domain $P\cup\{a\}$ by the parameter $a$. This justifies the following definition.

\begin{definition} Let $(P,\leq)$ be  a poset and $A,B,C$ pairwise
  disjoint subsets of
  $P$ satisfying the conditions Definition~\ref{defrot}, i.e., 
\begin{itemize}
\item $A$ is a downset,
\item $C$ is an up-set,
\item $A<C$,
\item $A\cup B\cup C=P$.
\end{itemize}
Then we call the triple $A,B,C$ an \emph{extendible triple}.
\end{definition} 

\subsection{Decomposing rotations}

Switching a graph with respect to a finite subset $X$ of its vertices can be done by switching the graph consecutively with respect to $\{x\}$ for all $x\in X$ in any order. It turns out that we have a similar
phenomenon for cuts and rotations, although we have to be a  bit more
careful.

\begin{proposition}\label{rotprop}
Let $A,B,C\subseteq P$ be an extendible triple of a poset $\P=(P,\leq)$,
 and let $E\subseteq F\subseteq P$ be downsets. Then the following hold:
\begin{itemize}
\item[(1)] $\Ro_{A,C}= \Cu_{A\cup B}\Cu_A  $,
\item[(2)] $\Cu_{F}= \Cu_{F\setminus E}\Cu_{E}  $.
\end{itemize}
Moreover,
if $A$ is finite, and $a_1,\ldots,a_k\in A$ enumerate $A$ in such a way that $a_j\not < a_i$ for all $1\leq i<j\leq k$, then 
\begin{itemize}
\item[(3)] $\Cu_{A}= \Cu_{a_k} \Cu_{a_{k-1}}\cdots \Cu_{a_2} \Cu_{a_1}  $.
\end{itemize}

\end{proposition}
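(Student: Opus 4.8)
The plan is to establish (1) and (2) by directly comparing the defining relations of the two sides, and then to deduce (3) from (2) by an induction on the size of $A$. In both (1) and (2) the only genuinely new point, beyond a routine case check, is to verify that the inner cut produces a poset on which the outer cut is actually defined, i.e.\ that the relevant set is a downset of the intermediate poset; once this is secured, the two compositions and the target operation can be compared block by block.

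For (1), first I would check that $A\cup B=P\setminus C$ is a downset of $\Cu_A(\P)$. This uses that $C$ is an up-set of $\P$ (so $P\setminus C$ is a downset), together with the concrete description of $\Cu_A$ read off from Definition~\ref{defrot}: the cut $\Cu_A$ leaves the orders inside $A$ and inside $B\cup C$ untouched, turns each $x\in B\cup C$ lying strictly above some $y\in A$ into an element incomparable to $y$, and turns each $x\in B\cup C$ incomparable to $y\in A$ into an element below $y$. From this one reads off that no point of $C$ drops below $A\cup B$, so $A\cup B$ remains a downset. Writing $(P,\preceq)$ for $\Cu_{A\cup B}\Cu_A(\P)$, I would then compare $\preceq$ with the order of $\Ro_{A,C}(\P)$ on each ordered pair of blocks $A,B,C$. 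The interior relations on $A$, $B$ and $C$ are unchanged by both cuts and agree with $\Ro_{A,C}$; the $B$--$A$ and $A$--$B$ relations lie inside the downset $A\cup B$ and are hence settled already by $\Cu_A$, matching the rotation; and the decisive point is that the two cuts together convert the original $A<C$ into $C\prec A$ and flip exactly the incomparabilities between $C$ and $A\cup B$, which is precisely what the last three clauses of Definition~\ref{defrot} prescribe.

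Part (2) is proved in the same spirit. Here I would first verify that $F\setminus E$ is a downset of $\Cu_E(\P)$: any element lying $\Cu_E$-below a point of $F\setminus E$ must already lie in $P\setminus E$ (no point of $E$ becomes $\Cu_E$-below a point of $P\setminus E$), hence lies $\le$-below it, hence lies in $F$ because $F$ is a downset of $\P$, so it lies in $F\setminus E$. With this in hand I would compare $\Cu_{F\setminus E}\Cu_E(\P)$ with $\Cu_F(\P)$ across the three blocks $E$, $F\setminus E$ and $P\setminus F$. On pairs within $F=E\cup(F\setminus E)$ and within $P\setminus F$ both sides restrict to $\le$; on the $E$--$(P\setminus F)$ and $(F\setminus E)$--$(P\setminus F)$ pairs the single cut $\Cu_F$ and the composite perform the same flip of ``above'' and ``incomparable''; the one case needing attention is the $E$--$(F\setminus E)$ pair, where $\Cu_E$ flips the relations and the second cut $\Cu_{F\setminus E}$ flips them back, reintroducing exactly the relations that $\Cu_F$ leaves inside $F$ untouched.

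Finally, (3) follows from (2) by induction. Set $A_i:=\{a_1,\dots,a_i\}$. The hypothesis $a_j\not<a_i$ for $i<j$ says precisely that $a_1,\dots,a_k$ is a linear extension of $(A,\le)$, so each initial segment $A_i$ is a downset of $\P$, and $A_k=A$. Applying (2) with $E:=A_{i-1}$ and $F:=A_i$, for which $F\setminus E=\{a_i\}$, gives $\Cu_{A_i}=\Cu_{a_i}\Cu_{A_{i-1}}$, the downset check inside the proof of (2) guaranteeing that $\{a_i\}$ is indeed a downset of $\Cu_{A_{i-1}}(\P)$. Iterating from $i=1$ to $i=k$ yields $\Cu_A=\Cu_{a_k}\Cu_{a_{k-1}}\cdots\Cu_{a_1}$. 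The main obstacle throughout is not conceptual but bookkeeping: in (1) and (2) one must carefully track how each cut transforms strict comparabilities, reverse comparabilities and incomparabilities between the blocks, and in particular confirm the downset claims that make the iterated cuts legitimate; the reduction of (3) to (2) is then immediate.
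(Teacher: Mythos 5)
Your proof is correct and follows essentially the same route as the paper's: a direct block-by-block comparison of the relations for (1) and (2), with (3) obtained by iterating (2) along the initial segments $A_i=\{a_1,\dots,a_i\}$. You are in fact more careful than the paper's terse argument, since you explicitly verify the downset conditions ($A\cup B$ in $\Cu_A(\P)$, $F\setminus E$ in $\Cu_E(\P)$) that make the outer cuts well defined.
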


\begin{proof}
The first part of the statement follows from the definition observing
that the relationship between the elements of $A$ and $C$ changes
twice by cuts, hence their ordering is reversed. The elements of $B$
alter their relationship to $A$ and $C$ once, satisfying
Definition~\ref{defrot}. Item~(2) is shown similarly. Item~(3) follows from the iterated use of
(2): observe that 
$\{a_i\}$ is a one-element downset in $ \Cu_{a_{i-1}}\cdots \Cu_{a_2} \Cu_{a_1}(\P)$.

\end{proof}

\begin{corollary}\label{rotcut}
Any rotation is the composition of two
cuts. In a finite poset any cut or rotation can be obtained as a
composition of cuts with respect to 
one-element sets.
\end{corollary}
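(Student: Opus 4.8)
The plan is to deduce both assertions directly from Proposition~\ref{rotprop}, so that essentially no new argument is needed beyond assembling its three items and checking, at the moment each item is invoked, that the set being cut is a downset of the poset to which the cut is applied. The only genuinely new ingredient is the standard fact that a finite poset admits a linear extension.

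For the first assertion I would simply cite item~(1): given an extendible triple $A,B,C$, the identity $\Ro_{A,C}=\Cu_{A\cup B}\Cu_A$ already exhibits the rotation as a composition of the two cuts $\Cu_A$ and $\Cu_{A\cup B}$. The point worth recording is that both parameters are downsets where they are used: $A$ is a downset of $\P$ by the definition of an extendible triple, and $A\cup B=P\setminus C$ is a downset because $C$ is an up-set; moreover $A\cup B$ remains a downset of $\Cu_A(\P)$, which is exactly what makes the right-hand composition in item~(1) meaningful. For the cut case of the second assertion I would treat $\Cu_F$ with $F$ a finite downset and read off item~(3) verbatim: it suffices to exhibit an enumeration $a_1,\dots,a_k$ of $F$ with $a_j\not<a_i$ for all $i<j$, and such an enumeration is precisely a linear extension of $(F,\leq)$ listing minimal elements first, which every finite poset possesses.

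For a rotation $\Ro_{A,C}$ of a finite poset I would first apply item~(1) to write $\Ro_{A,C}=\Cu_{A\cup B}\Cu_A$, and then decompose each of the two cuts into single-element cuts by item~(3), concatenating the two resulting lists. The one subtlety to flag is that the second cut $\Cu_{A\cup B}$ acts on the intermediate poset $\Cu_A(\P)$, not on $\P$; accordingly the enumeration of $A\cup B$ demanded by item~(3) must be chosen as a minimal-first enumeration relative to the order of $\Cu_A(\P)$, and one uses that $A\cup B$ is a downset of $\Cu_A(\P)$ as noted above. The main obstacle is therefore not computational but a matter of bookkeeping: keeping track of which poset each cut operates on, and verifying at each stage that the set being cut is a downset of the current poset so that item~(3) genuinely applies. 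Once these two observations are in place, the corollary follows with no further calculation.
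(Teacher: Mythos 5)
Your proposal is correct and follows exactly the route the paper intends: the corollary is stated as an immediate consequence of Proposition~\ref{rotprop}, obtained by combining item~(1) with item~(3) (and the existence of a linear extension of a finite poset). Your extra care in checking that $A\cup B$ is a downset of $\Cu_A(\P)$, and that the enumeration for the second application of item~(3) must be taken relative to the intermediate poset, makes explicit bookkeeping that the paper leaves implicit, but it is the same argument.
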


%-------------- rotation equivalence

\subsection{Rotation-equivalence}

Similarly to  the Seidel-switch for graphs,  rotations define a partition of the finite posets on the same domain.

\begin{definition}
We say that two posets $(P,\leq)$ and $(P,\preceq)$ are
\emph{rotation-equivalent} iff $(P,\leq)$ can be mapped to  $(P,\preceq)$ by
using a series of rotations. \emph{Cut-equivalence} is defined similarly. 
\end{definition} 

\begin{proposition}
Rotation-equivalence and cut-equivalence are identical relations.
\end{proposition}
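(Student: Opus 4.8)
The plan is to prove that the two relations coincide by establishing both inclusions between them, viewed as sets of ordered pairs of posets on the common domain $P$. One inclusion is immediate, and the other is handed to us almost for free by the decomposition results already proved.

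First I would dispose of the easy direction. Every cut $\Cu_A$ is by definition the rotation $\Ro_{A,\emptyset}$, i.e.\ a rotation with empty up-set part. Hence any series of cuts taking $(P,\leq)$ to $(P,\preceq)$ is in particular a series of rotations, so cut-equivalence implies rotation-equivalence.

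For the converse I would invoke Corollary~\ref{rotcut}, equivalently Proposition~\ref{rotprop}(1), which states that any single rotation $\Ro_{A,C}$ equals the composition of two cuts $\Cu_{A\cup B}\Cu_A$. Suppose $(P,\leq)$ and $(P,\preceq)$ are rotation-equivalent, witnessed by a series of rotations $\Ro_{A_1,C_1},\dots,\Ro_{A_n,C_n}$ carrying $(P,\leq)$ to $(P,\preceq)$. I would then replace each rotation $\Ro_{A_i,C_i}$ in the series by the corresponding pair of cuts $\Cu_{A_i\cup B_i}\Cu_{A_i}$, producing a longer series consisting exclusively of cuts which realizes the same overall map. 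This exhibits $(P,\leq)$ and $(P,\preceq)$ as cut-equivalent, so rotation-equivalence implies cut-equivalence, and combining the two inclusions gives the claimed equality of relations.

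The only point that needs a moment's attention — and the closest thing to an obstacle — is whether each cut appearing in the replacement is legitimately applied, i.e.\ whether the relevant sets are genuine downsets of the posets to which the cuts are applied at the intermediate stages of the series. This is precisely what Proposition~\ref{rotprop}(1) guarantees: whenever $A,B,C$ form an extendible triple of a poset $\Q$, the identity $\Ro_{A,C}=\Cu_{A\cup B}\Cu_A$ holds as an equality of well-defined operations on $\Q$, with $A$ a downset of $\Q$ and $A\cup B$ a downset of $\Cu_A(\Q)$. Thus the substitution is valid at every step and no new verification is required, so the argument reduces entirely to citing the already-established decomposition together with the trivial observation that cuts are rotations.
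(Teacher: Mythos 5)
Your proof is correct and follows essentially the same route as the paper, which simply cites Corollary~\ref{rotcut} (any rotation is a composition of two cuts, and every cut is a rotation). Your additional remark about the well-definedness of the intermediate cuts is a reasonable elaboration of what the paper leaves implicit, but it is not a different argument.
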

\begin{proof}
This follows from Corollary~\ref{rotcut}.
\end{proof}

\begin{proposition}\label{prop:equivalence}
Rotation-equivalence is an equivalence relation.
\end{proposition}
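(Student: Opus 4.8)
The plan is to verify the three defining properties of an equivalence relation separately, concentrating essentially all of the work in symmetry. Reflexivity is immediate from the trivial rotation $\Ro_{\emptyset,\emptyset}$, for which $A=C=\emptyset$ and $B=P$: the first clause of Definition~\ref{defrot} then reads $x\preceq y \iff x\leq y$, so $(P,\leq)$ is rotation-equivalent to itself. Transitivity is equally direct: if a series of rotations carries $(P,\leq)$ to $(P,\preceq)$ and a further series carries $(P,\preceq)$ to $(P,\sqsubseteq)$, then concatenating the two series carries $(P,\leq)$ to $(P,\sqsubseteq)$.

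The substance is symmetry, and for this I would isolate the key claim that a single rotation is undone by a single rotation. Concretely, let $\P=(P,\leq)$ have extendible triple $A,B,C$ and put $(P,\preceq):=\Ro_{A,C}(\P)$. I would first check that in $(P,\preceq)$ the set $C$ is a downset, the set $A$ is an up-set, and $C\prec A$. The last two facts are already recorded in the proof that $\Ro_{A,C}(\P)$ is a poset, while the first follows by inspecting which clauses of Definition~\ref{defrot} can place an element below an element of $C$: only the clause $x,y\in C$ with $x\leq y$ applies. Consequently $C,B,A$ is an extendible triple of $(P,\preceq)$, so the rotation $\Ro_{C,A}$ of $(P,\preceq)$ --- with $C$ now playing the role of the downset and $A$ that of the up-set --- is defined, and the claim to prove is that $\Ro_{C,A}(\Ro_{A,C}(\P))=\P$.

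To establish the claim I would compare the two orders pair by pair according to the blocks in which the elements lie. Within each of $A$, $B$, $C$ both rotations act trivially, so the original order is recovered there, and between $A$ and $C$ the comparabilities are reversed twice and hence restored. The only delicate cases are the mixed pairs involving $B$: for $a\in A$ and $b\in B$ the relation between them in $\leq$ has to be read off from the incomparability relation of $(P,\preceq)$ that the second rotation uses, and symmetrically for $b\in B$ and $c\in C$. Here I expect the main obstacle to be bookkeeping rather than depth; the point that makes it work is that $A$ is a downset and $C$ an up-set of $\P$, which excludes the relations $b<a$ and $c<b$ and thereby collapses comparability in $\leq$ to $a<b$ respectively $b<c$. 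Carrying out this short case check yields $\Ro_{C,A}(\Ro_{A,C}(\P))=\P$.

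Granting the key claim, symmetry follows formally. If $\Ro_{A_1,C_1},\ldots,\Ro_{A_n,C_n}$ is a series taking $(P,\leq)$ to $(P,\preceq)$ through intermediate posets $\P=\P_0,\P_1,\ldots,\P_n=(P,\preceq)$, then applying $\Ro_{C_n,A_n},\ldots,\Ro_{C_1,A_1}$ in this reversed order takes $(P,\preceq)$ back to $(P,\leq)$: by the claim applied to each $\P_{i}$, the rotation $\Ro_{C_i,A_i}$ of $\P_i$ is defined and returns $\P_{i-1}$. Thus $(P,\preceq)$ is rotation-equivalent to $(P,\leq)$, completing the verification that rotation-equivalence is reflexive, symmetric, and transitive.
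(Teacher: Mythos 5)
Your proposal is correct and follows essentially the same route as the paper: reflexivity via $\Ro_{\emptyset,\emptyset}$, transitivity by concatenation, and symmetry by observing that $C,B,A$ is an extendible triple of $\Ro_{A,C}(\P)$ so that $\Ro_{C,A}$ undoes $\Ro_{A,C}$, then reversing each step of a series. Your case check of the mixed pairs involving $B$ is a correct filling-in of the detail the paper leaves implicit.
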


\begin{proof}
 Reflexivity is implied by $\Ro_{\emptyset,\emptyset}(\P)=\P$ for every
 poset $\P$. Transitivity is a consequence of the definition. For
 symmetry, let $\P=(P,\leq)$ and $\Q=(P,\preceq)$ be two posets, and suppose first that  for some rotation $\Ro_{A,C}$ of $\P$, $\Ro_{A,C}(\P)$ is isomorphic to $\Q$. Assume without loss of generality that $\Ro_{A,C}(\P)=\Q$. 
 Then $A$ is an up-set and $C$ is a
 downset in $\Q=\Ro_{A,C}(\P)$, and $C\prec A$. Hence  $\Ro_{C,A}(\Q)$ is defined and equal to $\P$. Now if $\Q$ can be obtained from $\P$ applying more than one rotation, then every step can be reversed as in the above argument.
 \end{proof}

%------------3-element classes

Rotations divide the three-element posets with domain $\{a,b,c\}$ into the following three equivalence classes.

\begin{itemize}
\item[$\mathcal{O}_1$:] the class of  the 3-element
  antichain $a\perp b, b\perp c, c\perp a$;

$a<b,a<c, b  \perp c $; \quad $b<a,b<c, a  \perp c $; \quad $c<a,c<b, b  \perp c $;

$a>b,a>c, b  \perp c $; \quad $b>a,b>c, a  \perp c $; \quad $c>a,c>b, b  \perp c $; 

\item[$\mathcal{O}_2$:] the class of a 3-element chain: 

$a<b<c$;\quad $b<c<a$;\quad $c<a<b$;

$a <b, c\perp a, c\perp b$;\quad
$b <c, a\perp b, a\perp c$;\quad
$c <a, b\perp a, b\perp c$;
\item[$\mathcal{O}_3$:] the dual of $\mathcal{O}_2$: 

$a>b>c$; \quad $b>c>a$; \quad $c>a>b$;

$a >b, c\perp a, c\perp b$;\quad
$b >c, a\perp b, a\perp c$;\quad
$c >a, b\perp a, b\perp c$.
\end{itemize}

Interestingly, the rotation classes of the three-element subsets of a finite partial order, together with its maximal elements, determine the whole partial order.

\begin{proposition}\label{thm:maxdet}
Let $(P,\leq)$ and $(P,\preceq )$ be two finite posets. 
Then the following are equivalent:
\begin{enumerate}
\item $(P,\leq)= (P,\preceq )$.
\item $\max (P,\leq)=\max
(P,\preceq )$, and   $(P,\leq)$ and $(P,\preceq)$ are rotation-equivalent.
\item $\max (P,\leq)=\max
(P,\preceq )$, and   
 for all $a,b,c\in P$ the posets $(\{a,b,c\},\leq)$ and ${(\{a,b,c\},\preceq )}$  are
rotation-equivalent. 
\end{enumerate}
\end{proposition}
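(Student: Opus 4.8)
The plan is to prove the cycle of implications $(1)\Rightarrow(2)\Rightarrow(3)\Rightarrow(1)$. The implication $(1)\Rightarrow(2)$ is immediate: equal posets have the same maximal elements and are rotation-equivalent via $\Ro_{\emptyset,\emptyset}$. For $(2)\Rightarrow(3)$ the only point is that rotations restrict to rotations on subsets: if $A,B,C$ is an extendible triple of $(P,\leq)$ and $T\subseteq P$, then $A\cap T,\,B\cap T,\,C\cap T$ is an extendible triple of $(T,\leq)$ and $\Ro_{A,C}(P,\leq)$ restricted to $T$ equals $\Ro_{A\cap T,\,C\cap T}(T,\leq)$, since the defining clauses of Definition~\ref{defrot} refer only to block membership and to the relations $\leq,\perp$. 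Consequently any sequence of rotations turning $(P,\leq)$ into $(P,\preceq)$ restricts to one turning $(T,\leq)$ into $(T,\preceq)$, so all three-element restrictions remain rotation-equivalent; together with the shared maximal set this is exactly $(3)$.

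The substance is $(3)\Rightarrow(1)$, and I would first record two finite facts obtained by inspecting the three classes $\mathcal{O}_1,\mathcal{O}_2,\mathcal{O}_3$ listed above. Writing the relation of an unordered pair as one of $<,>,\perp$, a direct check of the nineteen labeled three-element posets shows: (A) two distinct labeled posets lying in the same class always differ on at least two of their three pairs --- equivalently, inside a fixed class a labeled poset is determined by the relations on any two of its pairs; and (B) if $s\perp t$ is fixed and a third point $r$ satisfies neither $r>s$ nor $r>t$, then the class of $\{r,s,t\}$ determines the pair of relations of $r$ to $s$ and to $t$, with the single exception that class $\mathcal{O}_1$ cannot tell $(r\perp s,\ r\perp t)$ from $(r<s,\ r<t)$; in particular, when exactly one of $r<s,\ r<t$ holds the class records which one.

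Now set $M:=\max(P,\leq)=\max(P,\preceq)$ and $P':=P\setminus M$, and reconstruct the order in three stages. Any two elements of $M$ are incomparable in both orders, so the orders agree on $M\times M$. Next I would show the orders agree between $M$ and $P'$: fix $y\in P'$; since $P$ is finite, $y$ lies below some maximal element, so the set of elements of $M$ above $y$ is non-empty in each order. For $m_1,m_2\in M$ we have $m_1\perp m_2$ and $y$ above neither, so fact~(B) applies to $\{y,m_1,m_2\}$, and since the class of this triple is the same for both orders (hypothesis~$(3)$) the two orders agree on whether $y$ lies below both, below neither, or below exactly one of $m_1,m_2$, and in the last case agree on which one. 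Because the up-set of $M$ above $y$ is non-empty, the ``below neither versus below both'' ambiguity of~(B) is resolved identically in both orders, and the ``exactly one'' readings pin the rest down; hence $y$ has the same relation to every element of $M$ in both orders. Finally, for $y,z\in P'$ pick any $m\in M$: the two orders now agree on the pairs $\{y,m\}$ and $\{z,m\}$ and share the class of $\{y,z,m\}$, so fact~(A) forces them to agree on $\{y,z\}$ as well. Thus $(P,\leq)=(P,\preceq)$. The degenerate cases $|P|\le 2$, $M=P$, or $|M|=1$ are handled directly: when $|M|=1$ the non-empty up-set of $M$ above each $y\in P'$ must be all of $M$.

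The main obstacle is the middle stage, the reconstruction of the relations between $M$ and $P'$. The naive parity information --- whether $y$ lies below an even or odd number of a given pair of maximal elements --- only determines the up-set of $y$ in $M$ up to complementation, and this complementary flip is consistent with the shared maximal set. What rules it out is the finer content of fact~(B): the classes $\mathcal{O}_2$ and $\mathcal{O}_3$ are genuinely distinct and distinguish the two single-edge labelings $y<m_1$ and $y<m_2$, so the shared class data fixes not merely the parity but the actual up-set, once non-emptiness removes the last ambiguity. Verifying (A) and (B) is a routine but essential case check against the explicit list of the classes $\mathcal{O}_1,\mathcal{O}_2,\mathcal{O}_3$.
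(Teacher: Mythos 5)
Your proof is correct and follows essentially the same route as the paper's: reconstruct the order from the shared maximal set and the shared $\mathcal{O}_i$-classes of triples, using pairs of maximal elements (with the $\mathcal{O}_1$ ambiguity resolved by non-emptiness of the up-set in $M$) and then a maximal reference point for pairs of non-maximal elements. Your explicit facts (A) and (B), and the restriction argument for $(2)\Rightarrow(3)$, merely make precise what the paper leaves as inspection of the class list.
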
 

\begin{proof} 
The implications (1) $\implies$ (2) and (2) $\implies$ (3) are obvious. To see that (3) implies (1), let $M=\max (P,\leq)=\max(P,\preceq )$. If $M=P$ then both posets
are antichains and the statement holds.
At first we examine the  relationship of the maximal elements to the
other elements of the poset.  If $|M|=1$, then we have a unique
maximal element $m$, and  $a\leq m$ for 
every $a\in P$. Now, let $|M|\geq 2$, $a\in P\setminus M$ and  $m_1$
and $m_2$ be two  distinct elements of  $M$. Let us assume that
$(a,m_1,m_2)\in \mathcal{O}_2$. As $m_1\perp m_2$,  the triple
$(a,m_1,m_2)$ is not a chain. The three element antichain is not in
$\mathcal{O}_2$, hence there is a comparability among the elements of 
$\{a,m_1,m_2\}$. By the maximality of $m_1$ and $m_2$ either $a<m_1$
or $a<m_2$. Only the first case can happen in $\mathcal{O}_2$, thus
we must have 
$a<m_1$ and $a\perp m_2$. Similarly, if $(a,m_1,m_2)\in
\mathcal{O}_3$, then $a<m_2$ and $a\perp m_1$. Finally assume that
$(a,m_1,m_2)\in \mathcal{O}_1$. Then either $a<m_1,m_2$ or $a\perp m_1,m_2$.
 The element
$a$ is below at least one maximal element, say  $a<m$. If $(a,m,m')\in
 \mathcal{O}_1$  for every $m'\in M$, then $a<m'$ for every $m'\in M$. Now we
can determine the relationship between the elements of $M$ and $P$: if 
$(a,m_1,m_2)\in \mathcal{O}_1$ for every $m_1,m_2\in M$, then $a<m$
for every $m\in M$. Otherwise, $a<m$ for some $m\in M$ if and only if
there is an element $m'\in M$ such that $(a,m,m')\in \mathcal{O}_2$.

Now let $a,b\in P\setminus M$. Let us choose an $m\in M$ satisfying
$b<m$. As $a\not> m$, we have that $a<b$ if and only if 
$(a,b,m)\in\mathcal{O}_2$.

We have shown that the maximal elements and the rotation classes of
the 3-element subsets uniquely determine $\leq$. They determine $\preceq$ in
the same way.
Hence, $(P,\leq)=(P,\preceq )$. 
\end{proof}

We remark that the equivalence of (1) and (2) of this proposition does not hold for infinite posets. Consider the order of the rationals $(\mathbb{Q},\leq)$ and apply the rotation $\Ro_{A,C}$ to this poset, where $A$ is the downset of negative numbers and $C$ its complement: while Item~(2) of Proposition~\ref{thm:maxdet} holds, (1) fails. We shall see in Section~\ref{sect:randompo} that the equivalence of (2) and (3) also holds for infinite posets.

%------------ orbit size

\begin{corollary}\label{orbitsize}
Let $(P,\leq )$ be a finite poset. Then there are at most  $2^{|P|}$ 
posets with domain $P$ which are rotation-equivalent to  $(P,\leq )$.
\end{corollary}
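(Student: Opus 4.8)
The plan is to count the number of rotation-equivalent posets by bounding the number of distinct rotations one can apply, using the characterization from Proposition~\ref{thm:maxdet}. The key observation is that rotation-equivalence preserves nothing about the maximal elements a priori, but Proposition~\ref{thm:maxdet} tells us that once we fix the set of maximal elements, the poset is completely determined by the rotation classes of its three-element subsets. So the strategy is to argue that the number of rotation-equivalent posets is controlled by the number of possible ``maximal element sets'' together with the fact that the three-element rotation classes are invariant under rotation.

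First I would note that by Corollary~\ref{rotcut}, every rotation-equivalent poset is obtained by a composition of cuts with respect to downsets, and by Proposition~\ref{rotprop}~(2) every cut $\Cu_F$ with $F$ a downset decomposes as a product of cuts by smaller downsets, so the relevant parameter is really which downset we cut by. The crucial point is that the rotation class in $\{\mathcal{O}_1,\mathcal{O}_2,\mathcal{O}_3\}$ of each three-element subset is a rotation-\emph{invariant}: any two rotation-equivalent posets agree on the rotation class of every triple $\{a,b,c\}$. This is essentially built into the definition of the classes $\mathcal{O}_1,\mathcal{O}_2,\mathcal{O}_3$, which were defined precisely as the rotation-equivalence classes of three-element posets. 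Thus, among all posets rotation-equivalent to $(P,\leq)$, the three-element rotation classes are all the \emph{same}; by Proposition~\ref{thm:maxdet}, such a poset is then uniquely determined by its set of maximal elements.

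Consequently I would conclude: the map sending a poset $(P,\preceq)$ rotation-equivalent to $(P,\leq)$ to its set of maximal elements $\max(P,\preceq)\subseteq P$ is injective, because two rotation-equivalent posets with the same maximal elements and the same three-element rotation classes coincide by Proposition~\ref{thm:maxdet}. Since there are exactly $2^{|P|}$ subsets of $P$, the number of posets in the rotation-equivalence class of $(P,\leq)$ is at most $2^{|P|}$, which is the desired bound.

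The main obstacle, and the step deserving the most care, is verifying the rigorously-stated claim that the three-element rotation class of every triple is genuinely a rotation-invariant of the whole poset—i.e., that applying a rotation $\Ro_{A,C}$ to $(P,\leq)$ induces on each triple $\{a,b,c\}$ exactly a rotation of the induced three-element subposet, so that its class in $\{\mathcal{O}_1,\mathcal{O}_2,\mathcal{O}_3\}$ is preserved. This requires checking that the restriction of $\Ro_{A,C}$ to $\{a,b,c\}$ is itself the rotation $\Ro_{A\cap\{a,b,c\},\,C\cap\{a,b,c\}}$ of the induced subposet, which follows from the local, pairwise nature of the defining clauses in Definition~\ref{defrot} (each clause depends only on which of $A,B,C$ the two elements lie in and on their original comparability). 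Once this locality is established the injectivity argument is immediate, so I would spend the bulk of the proof making that invariance explicit.
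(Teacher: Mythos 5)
Your proof is correct and follows essentially the same route as the paper: both arguments reduce to the injectivity of the map sending a poset in the rotation-equivalence class to its set of maximal elements, via Proposition~\ref{thm:maxdet}. The only difference is that you invoke the equivalence (3)~$\Leftrightarrow$~(1) and therefore spend effort verifying the (true, and easy) invariance of three-element rotation classes, whereas the paper applies the equivalence (2)~$\Leftrightarrow$~(1) directly, which makes that verification unnecessary.
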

\begin{proof} Any poset with domain $P$ in the  rotation-equivalence 
class  of  $(P,\leq)$ is  determined by its set of maximal elements by Proposition~\ref{thm:maxdet}. There  
are at most $2^{|P|}$ many choices to the set of the maximal elements.
\end{proof}

Note that Corollary~\ref{orbitsize} is a little misleading. In
most cases posets are considered  up to isomorphism; the number of
non-isomorphic rotation-equivalent posets can be much less. For example, we shall see below that in the class of
an $n$-element antichain there are $2^n$ posets, but only a set of $n+1$
non-isomorphic posets.

\begin{problem} Find the possible sizes of rotation-equivalence classes
  of the posets of size $n$. In particular, what is the maximum and minimum size
  of such a class? 
\end{problem}

%------------chains and antichains

We will now have a look at the rotation-equivalence classes of finite chains and antichains. In the following, let $\mathcal{C}_n$ denote the $n$-element
chain $1<2<\dots<n$ and $\mathcal{AC}_n$ denote the $n$-element
antichain on $\{1,\dots, n \}$. 

\begin{proposition}\label{chain} A finite poset $\P$ with domain $\{1,\ldots,n\}$ is rotation-equivalent
to 
\begin{enumerate}
\item $\mathcal{C}_n$ iff $\P$ is isomorphic to the disjoint union of two
  (possibly empty) chains $\mathcal{C}_s$ and $\mathcal{C}_t$, where $s+t=n$. 

\item $\mathcal{AC}_n$ iff $\P$ is isomorphic to a linear sum of 
 two   (possibly empty) antichains $\mathcal{AC}_s\oplus
 \mathcal{AC}_t$,   where $s+t=n$.
\end{enumerate}
\end{proposition}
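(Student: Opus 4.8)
The plan is to pin down the whole rotation-equivalence class of $\mathcal{C}_n$ (and of $\mathcal{AC}_n$) up to isomorphism, proving each of the two biconditionals by its two directions: an easy ``if'' via a single explicit cut, and an ``only if'' via a closure argument. Throughout I will use that rotation-equivalence is an equivalence relation (Proposition~\ref{prop:equivalence}) and that, in a finite poset, every cut and every rotation is a composition of one-element cuts (Corollary~\ref{rotcut}).

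For the ``if'' direction of (1) I would compute a single cut of $\mathcal{C}_n$. The downsets of $\mathcal{C}_n$ are exactly the initial segments $\{1,\dots,k\}$; reading off Definition~\ref{defrot} one sees that $\Cu_{\{1,\dots,k\}}(\mathcal{C}_n)$ keeps each of $\{1,\dots,k\}$ and $\{k+1,\dots,n\}$ a chain but turns every comparability between the two blocks into an incomparability, so the result is $\mathcal{C}_k\sqcup\mathcal{C}_{n-k}$. Taking $k=s$ shows that each $\mathcal{C}_s\sqcup\mathcal{C}_t$ with $s+t=n$ is rotation-equivalent to $\mathcal{C}_n$. For (2) every subset $A$ of $\mathcal{AC}_n$ is a downset, and $\Cu_A(\mathcal{AC}_n)$ leaves $A$ and its complement $B$ antichains while forcing $B<A$, hence equals $\mathcal{AC}_{|B|}\oplus\mathcal{AC}_{|A|}$; letting $|A|=t$ realizes every $\mathcal{AC}_s\oplus\mathcal{AC}_t$.

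For the two ``only if'' directions I would argue by closure. By Corollary~\ref{rotcut} it suffices to show that the family of disjoint unions of two chains, and the family of linear sums of two antichains, are each closed under a cut $\Cu_a$ with respect to a single minimal element $a$; the full rotation-equivalence classes then stay inside these families, into which $\mathcal{C}_n$ and $\mathcal{AC}_n$ already fall. The crucial local computation is that such a cut makes $a$ incomparable to everything formerly above it and places below $a$ everything formerly incomparable to it. Thus if $a=u_1$ is the bottom of a chain $u_1<\dots<u_s$ sitting beside a chain $v_1<\dots<v_t$, then after $\Cu_{u_1}$ the element $u_1$ becomes incomparable to $u_2,\dots,u_s$ and sits on top of the entire $v$-chain, so the two components become $v_1<\dots<v_t<u_1$ and $u_2<\dots<u_s$: again a disjoint union of two chains, now of sizes $t+1$ and $s-1$. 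The analogous computation inside $\mathcal{AC}_s\oplus\mathcal{AC}_t$ (with $a$ in the bottom antichain when $s>0$, or in the top one when $s=0$) returns $\mathcal{AC}_{s-1}\oplus\mathcal{AC}_{t+1}$.

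The main thing to get right --- and the only genuinely delicate point --- is this closure step for chains: I must verify not only that $a$ ``crosses over'' from the bottom of its own chain to the top of the other, but also that the two resulting blocks remain mutually incomparable (the former $v$'s and the surviving $u$'s stay incomparable because their relation lives entirely inside the untouched complement $B$). The antichain case is easier, and the degenerate cases of empty blocks need only a one-line check. Assembling the two directions and invoking Proposition~\ref{prop:equivalence} then gives both biconditionals.
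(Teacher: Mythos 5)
Your proof is correct and follows essentially the same route as the paper's: the ``if'' direction via a single explicit cut of $\mathcal{C}_n$ (resp.\ $\mathcal{AC}_n$), and the ``only if'' direction by decomposing every rotation into one-element cuts at minimal elements (Proposition~\ref{rotprop}/Corollary~\ref{rotcut}) and checking that the family of disjoint unions of two chains (resp.\ linear sums of two antichains) is closed under such cuts. Your write-up merely spells out the local computation --- the minimal element leaving its own block and landing on top of the other --- that the paper states in one line.
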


\begin{proof}
We prove (1). Clearly, the disjoint union $\D$ of $\mathcal{C}_s$ and $\mathcal{C}_t$, where $s+t=n$, is rotation-equivalent to $\C_n$: just apply $\Cu_A$ to $\C_n$, for an $s$-element downset $A$ of $\C_n$. For the converse, by Proposition~\ref{rotprop} it suffices to verify that applying a cut with respect to a minimal element of $\D$, we again obtain a poset which is a disjoint union of two chains. Let $a$ be a minimal element of $\D$, say $a\in\C_s$. Then $\Cu_a(\D)$ is the disjoint union of $\mathcal{C}_{s-1}$ and $\mathcal{C}_{t+1}$, because $a$ is removed from $\C_s$ and added on top of $\C_t$.

The argument for (2) is equally simple and left to the reader.
\end{proof}

\begin{lemma}\label{max} Let $\P=(P,\leq)$ be a poset and
  $S,T\subseteq P$ be 
  antichains with  $S<T$ and such that $S\cup T\ne\emptyset$. Then there is a rotation  $\Ro_{A,C}$ of $\P$  such that
 $\max(\Ro_{A,C}(\P))=S\cup T$. 
\end{lemma}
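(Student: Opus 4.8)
The plan is to prove existence by exhibiting an explicit extendible triple $A,B,C$ and then reading off $\max(\Ro_{A,C}(\P))$ block by block. First I would record, directly from Definition~\ref{defrot}, the following description of the maximal elements of a rotation: an element $m\in A$ is maximal in $\Ro_{A,C}(\P)$ iff it is maximal in $(A,\leq)$; an element $m\in B$ is maximal iff it is maximal in $(B,\leq)$ and $x<m$ for every $x\in A$ (using that $A$ is a downset, so no element of $A$ can lie above $m$); and, provided $A\neq\emptyset$, no element of $C$ is maximal, since $x\prec y$ for every $x\in C$ and every $y\in A$. This reduces the whole statement to choosing $A,C$ so that the $A$-block contributes exactly $S$, the $B$-block contributes exactly $T$, and the $C$-block contributes nothing.

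For the main case $S\neq\emptyset$ I would set
\[
A:=\{x\in P : x\leq s\text{ for some }s\in S\},\qquad
C:=\{z\in P : S<z\}\setminus\{z\in P : z\leq t\text{ for some }t\in T\}.
\]
Verifying that $A,B,C$ is an extendible triple is routine: $A$ is a downset with $\max(A,\leq)=S$ because $S$ is an antichain generating $A$; the set $C$ is an up-set, being the difference of the up-set $\{z:S<z\}$ and the downset of $T$; and $A<C$ holds since for $x\in A$ and $z\in C$ we have $x\leq s<z$ for some $s\in S$. The purpose of subtracting the downset of $T$ is precisely to control the $B$-block. One checks $T\subseteq B$, and since every element of $A$ lies below every $t\in T$, each $t$ is maximal in $(B,\leq)$ and sits above $A$, so $T$ is contained in the $B$-contribution. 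Conversely, any $m$ in the $B$-contribution satisfies $S<m$, so $m\notin C$ forces $m\leq t$ for some $t\in T$; were this strict, then $t\in B$ would lie above $m$, contradicting maximality of $m$ in $(B,\leq)$, whence $m=t\in T$. Thus the $B$-block contributes exactly $T$, the $A$-block contributes $S$, and as $A\neq\emptyset$ the $C$-block is empty, giving $\max(\Ro_{A,C}(\P))=S\cup T$.

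The only case this misses is $S=\emptyset$ (hence $T\neq\emptyset$), where $A=\emptyset$ would allow elements of $C$ to become maximal. Here I would instead take $A:=\{x\in P : x\leq t\text{ for some }t\in T\}$ and $C:=\{z\in P : T<z\}$; then $\max(A,\leq)=T$, while no element of $B$ lies above all of $A$, because any such $m$ would satisfy $T<m$ and therefore belong to $C$ rather than $B$. Consequently both the $B$- and $C$-blocks are empty and $\max(\Ro_{A,C}(\P))=T=S\cup T$. The hard part throughout is not the construction but ruling out extraneous maximal elements; this is exactly what forces the delicate shape of $C$ (removing the downset of $T$ in the first case) and the separate treatment of $S=\emptyset$ so that $A$ remains nonempty.
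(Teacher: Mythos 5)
Your proposal is correct and follows essentially the same route as the paper: the same choice of $A$ (the downset generated by $S$) and the same $C$ (the elements above $S$ not below any element of $T$), the same block-by-block description of $\max(\Ro_{A,C}(\P))$, and the same verification that the $B$-block contributes exactly $T$. The only cosmetic difference is that the paper handles $S=\emptyset$ by substituting $S\mapsto T$, $T\mapsto\emptyset$ and reusing the general construction, which yields precisely the sets you write down in your separate second case.
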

\begin{proof} 
Let $S $ and $T$ be given. By replacing $S$ by $T$ and $T$ by $\emptyset$ in case $S$ is empty, we may assume that $S$ is non-empty. Set
  $$A=\{x\in P\;|\; x\leq s \text{ for some }   s\in S \}$$ and
$$ C=\{x\in P\;|\; x> S  \text{ and } 
x\not\leq t    \text{ for any }   t\in T  \}.$$
Since $S\ne \emptyset$ we have $A\ne\emptyset$. Setting $B=P\setminus A\cup C$, and hence it follows from the definition of a rotation that 
$$\max(\Ro_{A,C}(\P))= \max(A,\leq)\cup \{b\in \max(B,\leq)\,|\,b>A   \}. $$ 
It remains to show that this set equals $S\cup T$. Clearly, $ \max(A,\leq)=S$; we show $T= \{b\in \max(B,\leq)\,|\,b>A   \}$. Let $t\in T$. Then $t\notin A$ since $S<T$; moreover, $t\notin C$ by the definition of $C$, so $t\in B$. No other element of $B$ can be above $t$ by the definition of $C$, hence $t\in\max(B,\leq)$. 
    Since $t> S$ we derive $t\in \{b\in \max(B,\leq)\,|\,b> A   \}$. For the converse, let $b$ be an element of the latter set; then since $b>S$ but $b\notin C$, we have $b\leq t$ for some $t\in T$. We already know $t\in B$, and so $b=t\in T$.
\end{proof}

%-------------- theorem rotation equivalence

\begin{theorem}\label{cor:roteq}
 Let $(P,\leq)$ and $(P,\preceq )$ be finite posets. Then
 the following are equivalent:
\begin{enumerate}
 \item $(P,\leq)$ and $(P,\preceq )$ are  rotation-equivalent.    
\item There is a rotation mapping  $(P,\leq)$ to $(P,\preceq )$.
\item For all $a,b,c\in P$ the
 posets $(\{a,b,c\},\leq)$ and $(\{a,b,c\},\preceq )$
 are rotation-equivalent.
\end{enumerate} 
\end{theorem}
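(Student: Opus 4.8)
The plan is to prove the cyclic chain of implications $(1) \implies (3) \implies (2) \implies (1)$. The implications $(2) \implies (1)$ and $(1) \implies (3)$ are immediate: a single rotation is a special case of a series of rotations, and rotation-equivalence of the whole poset clearly restricts to rotation-equivalence of every three-element subposet (since a rotation of $(P,\leq)$ induces a rotation on each three-element subset, and the three-element classes $\oo_1,\oo_2,\oo_3$ were computed to be exactly the rotation classes). The real content is therefore $(3) \implies (2)$: from the hypothesis that $(\{a,b,c\},\leq)$ and $(\{a,b,c\},\preceq)$ are rotation-equivalent for all triples, I must produce a \emph{single} rotation carrying $(P,\leq)$ to $(P,\preceq)$.

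The strategy for $(3) \implies (2)$ is to reduce to Proposition~\ref{thm:maxdet}, which says that the maximal elements together with the three-element rotation classes determine a finite poset. By hypothesis the two posets already agree on all three-element rotation classes, so it suffices to find one rotation $\Ro_{A,C}$ of $(P,\leq)$ whose image has the \emph{same maximal elements} as $(P,\preceq)$. Indeed, once $\Ro_{A,C}(P,\leq)$ and $(P,\preceq)$ have the same maximal elements, and since rotation preserves the three-element rotation classes, both posets satisfy condition (3) of Proposition~\ref{thm:maxdet} relative to each other; that proposition then forces $\Ro_{A,C}(P,\leq) = (P,\preceq)$, which is exactly statement (2).

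To carry this out I would first identify the structure of $\max(P,\preceq)$. The key observation is that $\max(P,\preceq)$ is itself an antichain in $(P,\preceq)$, but I need to realize it as $S \cup T$ with $S,T$ antichains in the \emph{original} order $(P,\leq)$ satisfying $S < T$, so that Lemma~\ref{max} applies. Here I would use the hypothesis on three-element subsets to control how the set $M := \max(P,\preceq)$ sits inside $(P,\leq)$: for any two elements $m_1,m_2 \in M$, their relationship in $(P,\leq)$ is constrained because $(\{m_1,m_2,c\},\leq)$ and $(\{m_1,m_2,c\},\preceq)$ lie in the same three-element class, and in $(P,\preceq)$ both $m_1,m_2$ are maximal hence incomparable. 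Analyzing the classes $\oo_1,\oo_2,\oo_3$ shows that within $M$ the original order $\leq$ can only be a disjoint union of the incomparable part and comparabilities that stack up, so that $M$ splits into $S \cup T$ with $S < T$ and both antichains in $(P,\leq)$. Then Lemma~\ref{max} yields a rotation $\Ro_{A,C}$ of $(P,\leq)$ with $\max(\Ro_{A,C}(P,\leq)) = S \cup T = M = \max(P,\preceq)$, completing the reduction.

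The main obstacle I anticipate is precisely the combinatorial analysis showing that $M = \max(P,\preceq)$ decomposes as $S \cup T$ with $S < T$ antichains in $(P,\leq)$; this is where all the hypotheses must be combined carefully, using the explicit lists defining $\oo_1,\oo_2,\oo_3$ together with the fact that the elements of $M$ are pairwise incomparable in $(P,\preceq)$ to pin down their mutual relations in $(P,\leq)$. Everything else is bookkeeping: once the decomposition is available, Lemma~\ref{max} and Proposition~\ref{thm:maxdet} finish the argument mechanically.
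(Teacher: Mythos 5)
Your proposal is correct and follows essentially the same route as the paper: reduce $(3)\implies(2)$ to Proposition~\ref{thm:maxdet} by using Lemma~\ref{max} to realize $M=\max(P,\preceq)$ as the set of maximal elements of a single rotation of $(P,\leq)$, which requires showing $M$ is a linear sum $S\oplus T$ of two $\leq$-antichains. The one step you leave as a sketch is carried out in the paper exactly as you anticipate: if $M$ is not already a $\leq$-antichain, pick $a<b$ in $M$ and use that each triple $\{a,b,c\}$ with $c\in M$ is a $\preceq$-antichain lying in $\mathcal{O}_1$ to conclude $c$ is $\leq$-comparable to exactly one of $a,b$, which yields the splitting.
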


\begin{proof} 
The implication from (2) to (1) is trivial, and (1) $\implies$ (3) is obvious. To see  $(3)\implies (2)$, let $M=\max (P,  \preceq )$. We claim that $M$ is a linear sum of at most two
  antichains in  ${(P,\leq)}$. To see this, we distinguish two cases. If $M$ is an antichain in $(P,\leq)$ then there is nothing to show. Otherwise, pick $a,b\in M$ such that $a< b$. Now let $c\in M$ be arbitrary. Then, since $\{a,b,c\}$ forms an antichain in $(P,  \preceq )$ and since $(\{a,b,c\},\leq)$ and $(\{a,b,c\},\preceq )$
 are rotation-equivalent, $c$ must be comparable to precisely one element in $\{a,b\}$, say without loss of generality $a$. Then $a<c$ and $b\perp c$ in ${(P,\leq)}$. It follows that $M$ is the linear sum of two antichains in ${(P,\leq)}$, namely those elements of $M$ which are incomparable with $a$ and those which are incomparable with $b$. 
By Lemma~\ref{max} there is a  rotation that maps $(P,\leq)$
to  a poset $(P, \leq')$ such that $M={\max(P,\leq')}$. We have that the three-element subsets of  $(P, \leq')$ and $(P,\preceq )$ are rotation-equivalent, and so Proposition~\ref{thm:maxdet} implies $(P, \leq')=(P,\preceq )$, proving (2).
\end{proof}

\begin{corollary}\label{rotrot=rot}  If 
$(P,\leq)$ and  $(P,\preceq )$ are rotation-equivalent finite posets, then
  there exists a single rotation which maps 
$(P,\leq)$ to $(P,\preceq )$. Consequently, the composition of two rotations is
  a rotation.
\end{corollary}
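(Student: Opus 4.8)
The plan is to obtain both assertions as essentially immediate consequences of Theorem~\ref{cor:roteq}, which has already done the combinatorial work. For the first sentence, I would simply invoke the implication $(1)\implies(2)$ of that theorem: if $(P,\leq)$ and $(P,\preceq)$ are rotation-equivalent finite posets, then by definition they differ by a series of rotations, so statement~(1) of Theorem~\ref{cor:roteq} holds, and hence statement~(2) gives a single rotation $\Ro_{A,C}$ with $\Ro_{A,C}(P,\leq)=(P,\preceq)$. There is nothing further to verify, since the $A,C$ supplied by~(2) satisfy the conditions of Definition~\ref{defrot} with respect to $(P,\leq)$ by construction.

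For the ``consequently'' clause I would set up the reduction explicitly. Fix a poset $\P=(P,\leq)$, a rotation $\Ro_{A_1,C_1}$ of $\P$, and a rotation $\Ro_{A_2,C_2}$ of $\Ro_{A_1,C_1}(\P)$, and put
$$\Q:=\Ro_{A_2,C_2}\bigl(\Ro_{A_1,C_1}(\P)\bigr).$$
Then $\Q$ is obtained from $\P$ by a series of (namely two) rotations, so $\P$ and $\Q$ are rotation-equivalent directly by the definition of rotation-equivalence. Applying the first part of the corollary to the pair $\P,\Q$ produces a single rotation $\Ro_{A,C}$ of $\P$ with $\Ro_{A,C}(\P)=\Q$, which is exactly the assertion that the composite operation $\Ro_{A_2,C_2}\circ\Ro_{A_1,C_1}$ agrees with the rotation $\Ro_{A,C}$ on $\P$.

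I do not expect any genuine obstacle here, since the substance of the statement is already packaged into Theorem~\ref{cor:roteq} (through the route $(3)\implies(2)$, which relies on Lemma~\ref{max} and Proposition~\ref{thm:maxdet}). The only points requiring a moment of care are bookkeeping ones: first, that the definition of rotation-equivalence permits a series of arbitrary length and so in particular covers the two-step composite above; and second, that the rotation delivered by~(2) of Theorem~\ref{cor:roteq} is a rotation \emph{of the starting poset} $\P$, i.e.\ that its parameters form an extendible triple for $(P,\leq)$ rather than for some intermediate poset. Both are guaranteed by the formulation of Theorem~\ref{cor:roteq}, so the proof reduces to citing that theorem and spelling out the identification of the composite with a single rotation.
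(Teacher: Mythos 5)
Your proof is correct and follows the same route as the paper, which likewise derives the corollary directly from the implication $(1)\implies(2)$ of Theorem~\ref{cor:roteq}. The paper simply states "The statement follows from Theorem~\ref{cor:roteq}," and your spelled-out bookkeeping is exactly the intended argument.
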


\begin{proof} The statement follows from Theorem~\ref{cor:roteq}.
\end{proof}

We now show that the decision problem whether or not two given finite posets
are isomorphic and the decision problem whether or not these posets are rotation-equivalent have the
same computational complexity.

\begin{theorem}\label{compl} It is poset-isomorphism complete to
  decide for two given finite posets $\P,\Q$ whether or not $\P$ is rotation-equivalent to a poset isomorphic to $\Q$.
\end{theorem}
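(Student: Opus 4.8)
The plan is to establish the two reductions witnessing completeness: that the problem is no harder than poset-isomorphism, and that poset-isomorphism reduces to it (recall that poset-isomorphism is itself graph-isomorphism complete). The common tool is a reformulation of rotation-equivalence as an isomorphism problem for a derived ternary structure. By Theorem~\ref{cor:roteq}, two posets on the \emph{same} domain $P$ are rotation-equivalent precisely when, for every triple $\{a,b,c\}\subseteq P$, the induced orders lie in the same class $\mathcal{O}_1$, $\mathcal{O}_2$ or $\mathcal{O}_3$. I would encode this assignment of classes to triples by a ternary relation $R$ on $P$ recording, for each ordered triple, whether its induced order lies in $\mathcal{O}_2$ (the remaining cases being determined by the values of $R$ on the reorderings of the triple), obtaining a finite relational structure $\mathcal{T}(\P)$ computable from $\P$ in time $O(|P|^3)$. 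Thus, on a fixed domain, rotation-equivalence of $(P,\leq)$ and $(P,\preceq)$ amounts to the identity $\mathcal{T}(P,\leq)=\mathcal{T}(P,\preceq)$.

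First I would prove: $\P$ is rotation-equivalent to a poset isomorphic to $\Q$ if and only if $\mathcal{T}(\P)\cong\mathcal{T}(\Q)$. The forward direction is immediate, since rotation-equivalence on a fixed domain preserves $\mathcal{T}$ up to equality and isomorphism preserves it up to isomorphism. For the converse, given an isomorphism $f\colon\mathcal{T}(\P)\to\mathcal{T}(\Q)$, I pull the order of $\Q$ back along $f$ to a poset $\Q'$ on the domain of $\P$ with $\Q'\cong\Q$ and $\mathcal{T}(\Q')=\mathcal{T}(\P)$; by Theorem~\ref{cor:roteq}, $\Q'$ is rotation-equivalent to $\P$, as required. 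This reduces our problem to isomorphism of the structures $\mathcal{T}(\P),\mathcal{T}(\Q)$. Since isomorphism of finite relational structures reduces to graph-isomorphism and hence to poset-isomorphism, this settles the ``easy'' direction.

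For the hard direction I would reduce poset-isomorphism to our problem by a gadget construction $\P\mapsto\P^{*}$, computable in polynomial time, with $\P\cong\Q\Rightarrow\P^{*}\cong\Q^{*}$, and arranged so that $\mathcal{T}(\P^{*})$ recovers the original poset $\P$ up to isomorphism. Granting this, non-isomorphic $\P,\Q$ yield non-isomorphic $\mathcal{T}(\P^{*}),\mathcal{T}(\Q^{*})$, while isomorphic $\P,\Q$ trivially yield isomorphic ones, so by the equivalence of the first step, $\P\cong\Q$ holds exactly when $\P^{*}$ is rotation-equivalent to a poset isomorphic to $\Q^{*}$, which is the desired reduction. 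The intended mechanism is to attach a distinguished ``anchor'' to $\P$ (for intuition, a global minimum $g$ below $\P$) and to read the order of $\P$ off the triples through the anchor: for comparable $u,v$ the triple $\{g,u,v\}$ is a chain, and the value of $R$ on its orderings (its class $\mathcal{O}_2$ versus $\mathcal{O}_3$) reveals whether $u<v$ or $v<u$, while for incomparable $u,v$ the triple falls into $\mathcal{O}_1$. Hence, once the anchor is located inside $\mathcal{T}(\P^{*})$, the whole order on the original points is recovered up to isomorphism.

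The main obstacle is precisely the location of the anchor: it must be recoverable from $\mathcal{T}(\P^{*})$, which is a \emph{rotation-invariant} object, whereas rotations destroy the property of being a global minimum, and $R$ cannot even separate a common lower bound from a common upper bound, both sitting inside $\mathcal{O}_1$. Consequently a bare global minimum will not suffice, and I expect to need a more elaborate, $\mathcal{T}$-rigid gadget — large and asymmetric enough that its special element is the unique point of $\mathcal{T}(\P^{*})$ carrying a prescribed pattern of chain-orientations to the remaining points — together with a proof, leaning on Proposition~\ref{thm:maxdet} and Lemma~\ref{max} to control the rotation-equivalence class, that every isomorphism between the derived ternary structures must respect this gadget. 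Designing the gadget and verifying its rigidity is the technical heart of the argument; the remaining steps are routine bookkeeping.
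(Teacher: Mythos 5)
Your reduction of the problem \emph{to} poset isomorphism is correct, and it takes a genuinely different route from the paper. You encode each poset by the ternary structure $\mathcal{T}(\P)$ of $\mathcal{O}_i$-classes of its triples and use Theorem~\ref{cor:roteq} to show that $\P$ is rotation-equivalent to a poset isomorphic to $\Q$ iff $\mathcal{T}(\P)\cong\mathcal{T}(\Q)$; this argument is sound. The paper instead canonicalizes: by Lemma~\ref{max} and Proposition~\ref{thm:maxdet}, for each $p\in P$ there is a \emph{unique} poset $\P_p$ rotation-equivalent to $\P$ whose only maximal element is $p$, and one tests whether $\P_p\cong\Q_q$ for some $q\in Q$. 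Your version makes the rotation invariant explicit but needs the (standard, though not free) fact that isomorphism of finite ternary structures reduces to poset isomorphism; the paper's stays inside poset isomorphism directly at the cost of $|Q|$ isomorphism queries. Either settles the upper bound.

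The genuine gap is in the hardness direction. You correctly diagnose the obstacle for your anchor idea --- the anchor must be recoverable from the rotation-invariant structure $\mathcal{T}(\P^{*})$, and $\mathcal{O}_1$ cannot separate a common lower bound from a common upper bound --- but you then defer the entire construction (``designing the gadget and verifying its rigidity is the technical heart''), so no reduction is actually exhibited. Moreover, the anchor strategy aims at something harder than necessary: the paper never tries to make the original order readable from the derived structure. It sets $\P'$ and $\Q'$ to be the disjoint unions of $\P$ and $\Q$ with an $n$-element antichain $\S$ (where $n=|P|=|Q|$) and shows that any rotation $\Ro_{A,C}$ with $\Ro_{A,C}(\P')\cong\Q'$ must be trivial: if $A$ or $C$ meets $\S$, the comparability graph of the rotated poset becomes connected, contradicting the disconnectedness of $\Q'$; if $A\cup C\neq\emptyset$ but avoids $\S$, then all $n$ isolated points of $\S$ fall into a single connected component after the rotation, leaving at most $n$ components where $\Q'$ has at least $n+1$. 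Hence $A\cup C=\emptyset$ and the question collapses to $\P\cong\Q$. To complete your proof you would either need to actually construct and verify a $\mathcal{T}$-rigid gadget, or replace that step by an argument of this disjoint-antichain type, which shows the gadget machinery is not needed.
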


\begin{proof} 
At first we reduce the poset isomorphism problem to the rotation-equivalence problem. Let $\P=(P,\leq)$ and $\Q=(Q,\preceq)$ be two finite posets. Since isomorphic posets have the same size, we 
may assume that $|P|=|Q|=n$. Let $\S$ be an $n$-element antichain on some set $S$ which is disjoint from $P$ and $Q$. Let $\P'$ and $\Q'$ be the disjoint unions of $\P$ and $\Q$ with $\S$. To prove the theorem it is enough to show that $\P$ and $\Q$ are isomorphic if and only if $\P'$ and $\Q'$ are rotation-equivalent. One direction being trivial, we assume that $\P'$ and $\Q'$ are rotation-equivalent, i.e., there exists a rotation $\Ro_{A,C}$ of $\P'$ such that $\Ro_{A,C}(\P')$ is isomorphic to $\Q'$. Suppose that $A\cup C\neq\emptyset$. If $A\cap S\neq \emptyset$, then the symmetric closure of the poset relation of $\Ro_{A,C}(\P')$ is connected, a contradiction since the latter is not the case for $\Q'$. The same contradiction arises when $C\cap S\neq \emptyset$, so we must have $A\cap S=C\cap S=\emptyset$. In that case, all elements of $S$ belong to the same connected component of symmetric closure of the poset relation of $\Ro_{A,C}(\P')$, hence there are at most $n$ such components, again a contradiction. Hence, $A\cup C=\emptyset$, and so $\P'$ and $\Q'$ are in fact isomorphic. But then $\P$ and $\Q$ are isomorphic as well.

We now reduce the rotation-equivalence problem to the isomorphism problem. Let $\P=(P,\leq)$ and $\Q=(Q,\preceq)$ be two finite posets. Pick any $p\in P$; by Lemma~\ref{max}, there exists a unique poset $\P_p$ on $P$ which is rotation-equivalent to $\P$ and whose only maximal element is $p$. Similarly, for every $q\in Q$ there exists a unique poset $\Q_q$ on $Q$ which is rotation-equivalent to $\Q$ and whose only maximal element is $q$. By Proposition~\ref{thm:maxdet}, $\P$ and $\Q$ are rotation-equivalent if and only if $\P_p$ is isomorphic to $\Q_q$ for some $q\in Q$.
\end{proof}

In the following proposition we only observe that poset-isomorphism completeness is equivalent to graph-isomorphism completeness.

\begin{proposition}
It is graph-isomorphism complete to
  decide for two given finite posets $\P,\Q$ whether or not $\P$ and $\Q$ are isomorphic.
\end{proposition}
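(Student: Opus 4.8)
The plan is to establish graph-isomorphism completeness by exhibiting polynomial-time reductions in both directions between the graph-isomorphism problem and the poset-isomorphism problem. The two directions will be treated separately.

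First I would reduce graph-isomorphism to poset-isomorphism. Given two finite graphs $G_1,G_2$, I would associate to each graph $G_i$ a finite poset $\P_i$ by a standard gadget construction in which the order type of $\P_i$ encodes $G_i$ faithfully. The usual device is to build a poset of small height whose bottom level is the vertex set of $G_i$ and whose next level has one element for each edge of $G_i$, with an edge-element placed above exactly its two endpoints; one then verifies that any poset-isomorphism $\P_1\to\P_2$ must send vertex-elements to vertex-elements and edge-elements to edge-elements, and hence restricts to a graph-isomorphism $G_1\to G_2$, and conversely that every graph-isomorphism lifts to a poset-isomorphism. I would need to argue that the levels are order-theoretically distinguishable (e.g.\ by comparing the number of elements below each point, or by the maximality structure), so that no isomorphism can mix them; for graphs without isolated vertices this is immediate, and isolated vertices can be handled by a uniform padding of both graphs beforehand.

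Second I would reduce poset-isomorphism to graph-isomorphism. Given finite posets, I would pass to a suitable graph encoding of each poset from which the order relation is recoverable up to isomorphism; the comparability graph together with a marking that recovers the orientation is the natural candidate, but since an abstract comparability graph loses the direction of the order, I would instead use a gadget that records each covering relation $x<y$ by a small asymmetric graph attached between $x$ and $y$, so that from the resulting graph one can reconstruct the poset up to isomorphism. One then checks that two posets are isomorphic if and only if their associated graphs are isomorphic.

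The main obstacle I expect is the directionality issue in the second reduction: undirected graphs do not carry orientation, so a literal comparability-graph construction is insufficient, and the chosen gadget must be rigid enough that every graph-isomorphism between the two encodings is forced to respect the direction of each comparability, while remaining polynomial in size. Care is also needed with degenerate cases such as antichains (whose comparability graph is edgeless) and with ensuring that the padding used to eliminate isolated vertices in the first reduction does not accidentally create spurious isomorphisms. Once these rigidity checks are in place, both reductions are clearly polynomial-time, and together they yield the claimed graph-isomorphism completeness.
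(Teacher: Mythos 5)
Your plan is correct, but both of your reductions differ from the ones in the paper. For the direction from graph-isomorphism to poset-isomorphism, you use the classical incidence poset (vertices below, one element per edge above its two endpoints); the paper instead takes \emph{all} one- and two-element subsets of the vertex set and encodes adjacency in the \emph{orientation} of the comparability (a pair sits above its two singletons if it is an edge and below them otherwise), which makes the two sorts of elements and the graph itself recoverable without any case distinction about isolated vertices. Your version works too, and your worry about isolated vertices is in fact unnecessary: edge-elements have exactly two elements strictly below them while vertex-elements are minimal, so no padding is needed. For the direction from poset-isomorphism to graph-isomorphism, you propose the standard route through the Hasse diagram viewed as a digraph, replacing each covering pair by an asymmetric gadget; the paper instead stratifies the poset into levels, attaches to the $i$-th level a clique of size $n+i$ (so that distinct levels are marked by cliques of distinct sizes larger than anything occurring in the rest of the graph), and joins comparable elements in adjacent levels by an edge --- orientation is then recovered for free because level is monotone in the order, so no per-edge gadget is needed. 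Your approach buys generality (it is really the generic reduction of digraph isomorphism to graph isomorphism), while the paper's buys a concrete construction with no rigidity argument to verify for individual gadgets. The one thing you would still owe the reader is the explicit asymmetric gadget together with the check that its copies cannot be confused with the original vertices or with each other; this is routine but it is the entire content of your second reduction, so it cannot remain a placeholder in a finished proof.
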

\begin{proof}
Let $(V,E)$ and $(W,F)$ be two finite graphs. Let $V'$ be the set of one- and two-element subsets of $V$. Define a poset $\P=(V',\leq)$ by setting $\{u\}\leq \{u\}$, $\{u\}\leq\{u,v\}$ iff $(u,v)\in E$ and $\{u\}\geq\{u,v\}$ iff $(u,v)\notin E$. Define $\Q$ from $(W,F)$ is the same manner. Then $\P$ and $\Q$ are isomorphic if and only if $(V,E)$ and $(W,F)$ are isomorphic. This reduces the graph-isomorphism problem to the poset-isomorphism problem.

For the converse, let $(P,\leq)$ and $(Q,\preceq)$ be two finite posets. We may assume that $|P|=|Q|=n$. Let $s$ be the number of levels of $P$.
For every level $1\leq i\leq s$, pick elements $p^i_1,\ldots,p^i_{n+i}$ outside $P$, and define a graph on $P':=P\cup\{p^i_j\; |\; 1\leq i\leq s\ \wedge\  1\leq j\leq n+i\}$ as follows. All $p^i_j, p^i_{k}$ are connected by an edge; moreover, all $p^i_j$ are connected with all elements of the $i$-th level of $(P,\leq)$. We connect furthermore $x,y\in P$ by an edge iff they are comparable with respect to $\leq$, and they are in adjacent levels. Define a graph from $(Q,\preceq)$ in the same manner. Then the two graphs obtained are isomorphic if and only if $(P,\leq)$ and $(Q,\preceq)$ are isomorphic.
\end{proof}

We finish this section with an extension of our results on finite posets to infinite posets via compactness. We would like to remark that we do not dispose of a ``direct'' proof of the following proposition which does not make use of our results on finite posets.

\begin{proposition}\label{prop:koenig}
Theorem~\ref{cor:roteq} and hence also Corollary~\ref{rotrot=rot} hold for all infinite posets $(P,\leq)$ and $(P,\preceq)$ as well.
\end{proposition}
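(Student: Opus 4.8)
The plan is to establish the three equivalences for arbitrary $(P,\leq)$ and $(P,\preceq)$ by reducing the only nontrivial implication, $(3)\implies(2)$, to the finite case via a compactness argument. The implication $(2)\implies(1)$ is immediate from the definition of rotation-equivalence. For $(1)\implies(3)$ I would first observe that the restriction of any rotation to a subset is again a rotation: if $A,B,C$ is an extendible triple of $(P,\leq)$ and $X\subseteq P$, then $A\cap X,\,B\cap X,\,C\cap X$ is an extendible triple of $(X,\leq)$, and $\Ro_{A,C}(\P)$ restricted to $X$ equals $\Ro_{A\cap X,\,C\cap X}$ of $(X,\leq)$. Consequently a finite series of rotations witnessing $(1)$ restricts, for every three-element $\{a,b,c\}\subseteq P$, to a series of rotations witnessing $(3)$.

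For $(3)\implies(2)$ the idea is to encode a single rotation of $(P,\leq)$ carrying it to $(P,\preceq)$ as a point of a compact space. Using the one-point-extension viewpoint of Proposition~\ref{rotext}, such a rotation is determined by a coloring $f\colon P\to\{A,B,C\}$, where $f(x)$ records into which of the three sets $x$ is placed. A coloring corresponds to a genuine rotation turning $\leq$ into $\preceq$ precisely when, for all $x,y\in P$: the set $f^{-1}(A)$ is a downset, $f^{-1}(C)$ is an up-set and $f^{-1}(A)<f^{-1}(C)$ (the conditions of an extendible triple), and the rotated relation computed from the colors of $x,y$ and their $\leq$-relation agrees with $\preceq$. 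Crucially, each of these requirements is a conjunction, over pairs $(x,y)$, of clauses depending only on $f(x)$ and $f(y)$; that is, the whole problem is a constraint satisfaction problem with binary constraints over the three-element domain $\{A,B,C\}$.

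I would then invoke compactness. Equip $\{A,B,C\}^{P}$ with the product of the discrete topologies; this space is compact. For each pair $(x,y)$ the set of colorings satisfying the clause attached to $(x,y)$ is clopen, so a global solution exists as soon as every finite family of these clauses is simultaneously satisfiable. A finite family mentions only finitely many elements, forming a finite subset $F\subseteq P$, and it suffices to produce a coloring of $F$ meeting all clauses among elements of $F$ and then extend it to $P$ arbitrarily. Here the finite theorem pays off: every three-element subset of $F$ is a three-element subset of $P$, so $(F,\leq)$ and $(F,\preceq)$ satisfy condition $(3)$ of the finite Theorem~\ref{cor:roteq}; hence there is a single rotation $\Ro_{A_0,C_0}$ with $\Ro_{A_0,C_0}(F,\leq)=(F,\preceq)$, and its associated coloring of $F$ satisfies exactly the clauses among elements of $F$. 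Thus all finite intersections are nonempty, compactness yields a global coloring $f$, and $f$ defines an extendible triple $A,B,C$ of $(P,\leq)$ with $\Ro_{A,C}(P,\leq)=(P,\preceq)$, proving $(2)$. Finally, Corollary~\ref{rotrot=rot} follows exactly as in the finite case: two posets related by any finite series of rotations satisfy $(1)$, hence $(2)$, so a single rotation already relates them.

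The step I expect to be the main obstacle is making the encoding genuinely local: one must verify that each extendibility condition --- downset, up-set, and $A<C$ --- together with the agreement of the rotated relation with $\preceq$, can be phrased pairwise, so that the corresponding subsets of $\{A,B,C\}^{P}$ are clopen and compactness applies. Once this local character is confirmed, the reduction of finite satisfiability to the finite Theorem~\ref{cor:roteq} is immediate and the argument closes.
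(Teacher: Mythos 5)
Your argument is correct, and it is the same strategy as the paper's: the only substantive implication is $(3)\implies(2)$, and both proofs obtain the infinite rotation as a limit of rotations supplied by the finite Theorem~\ref{cor:roteq} applied to finite subsets. The difference is in how the limit is taken. The paper enumerates $P=\{p_i\mid i\in\omega\}$, builds the finitely branching tree of restrictions of the rotations $\Ro_n$ of the initial segments $P_n$, and extracts an infinite branch by K\"onig's tree lemma; as written, that argument covers only countable posets, even though the proposition is stated for all infinite posets. Your formulation via compactness of the product space $\{A,B,C\}^P$ (equivalently, the compactness theorem for propositional logic) removes the dependence on an $\omega$-enumeration and yields the result for arbitrary cardinalities, at the modest price of invoking Tychonoff for products of finite discrete spaces rather than K\"onig's lemma. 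The key observation you isolate --- that the extendible-triple conditions (downset, up-set, $A<C$) and the agreement of the rotated order with $\preceq$ are all binary constraints on the coloring, hence clopen conditions depending on two coordinates --- is exactly what makes either compactness argument go through, and your verification that a finite constraint family involving only elements of a finite $F\subseteq P$ is satisfied by the coloring coming from the single rotation $\Ro_{A_0,C_0}$ with $\Ro_{A_0,C_0}(F,\leq)=(F,\preceq)$ is sound. Your explicit justification of $(1)\implies(3)$, via the fact that $A\cap X, B\cap X, C\cap X$ is an extendible triple of $(X,\leq)$ and that $\Ro_{A,C}$ restricts to $\Ro_{A\cap X, C\cap X}$ on $X$, is also correct (the paper leaves this implicit), and the deduction of Corollary~\ref{rotrot=rot} is as in the finite case.
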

\begin{proof}
We only need to show that (3) implies (2) in Theorem~\ref{cor:roteq}. Let $\{p_i\,|\,i\in\omega\}$ be an enumeration of $P$, and set $P_n:=\{p_i\,|\,i\leq n\}$, for all $i\in\omega$. By Theorem~\ref{cor:roteq}, there exist rotations $\Ro_n$ such that $\Ro_n(P_n,\leq)=(P_n,\preceq)$. Let $S$ consist of all restrictions of some $\Ro_n$ to some set $P_m$, where $m\leq n$. For $\Ro, \Ro'\in S$, set $\Ro\sqsubseteq \Ro'$ iff the domain of the poset rotated by $\Ro$ is contained in the corresponding domain for $\Ro'$, and $\Ro'$ agrees with $\Ro$ on the set where they are both defined. Then $\sqsubseteq$ defines a finitely branching tree on $S$. By K\"{o}nig's tree lemma, this tree has an infinite branch. This branch defines a rotation which sends $(P,\leq)$ to $(P,\preceq)$.
\end{proof}

\section{The random poset}\label{sect:randompo}
 
In this section we turn our attention to rotations of the random poset $\pp=(P,\leq)$. Up to isomorphism, $\pp$ is the unique countably infinite partial order satisfying the following \emph{extension property}:

\begin{itemize}
\item[(EXT)] For every finite subset $Q\subseteq P$ and every one-point extension $(Q\cup\{s\},\leq)$ of $(Q,\leq)$
 there exists $a\in P$ such that $(Q\cup\{s\},\leq)$ and $(Q\cup\{a\},\leq)$ are isomorphic via the mapping between these orders which fixes all $q\in Q$ and sends $s$ to $a$.
\end{itemize}

The image of a
rotation of $\pp$ is not necessarily isomorphic to $\pp$. For
example, if $A=\{x\in \pp\,|\, x\leq a   \}$ for some $a\in \pp$ and $C>A$ is arbitrary, then
$\Ro_{A,C}(\pp)$  has a maximal element (namely the element $a$), hence it cannot be isomorphic to
the random poset. In the companion paper~\cite{ppppsz}, we showed the existence of non-trivial rotations of $\pp$ which send $\pp$ to a partial order isomorphic to $\pp$ using model-theoretic methods; here, we give a combinatorial proof of this fact using one-point extensions. The existence of such a rotation of $\pp$ implies that the group of rotating permutations in Theorem~\ref{reductok} really is a proper supergroup of $\aut(\pp)$: for, if $\Ro_{A,C}(\pp)\simeq \pp$ for $A,C\subseteq \pp$ for which either $A$ or $C$ is non-empty, then this isomorphism is witnessed by a permutation in $\sym(\pp)\setminus\aut(\pp)$, and this permutation separates the group of rotating permutations from $\aut(\pp)$.

\begin{theorem}\label{thm:constr}
There is a rotation of the random poset $\pp$ whose image is
isomorphic to $\pp$.
\end{theorem}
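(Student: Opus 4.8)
The goal is to construct an extendible triple $A,B,C$ on $\pp=(P,\leq)$ such that the rotated structure $\Ro_{A,C}(\pp)$ again satisfies the extension property (EXT), since (EXT) characterizes $\pp$ up to isomorphism. By Proposition~\ref{rotext}, specifying such a triple is equivalent to specifying a one-point extension of $\pp$ to a domain $P\cup\{a\}$ and setting $A=\{x\mid x<a\}$, $C=\{x\mid x>a\}$, $B=\{x\mid x\perp a\}$. So the plan is to build an explicit one-point extension whose induced rotation preserves genericity. The earlier degenerate example shows we must avoid choosing $A$ to be the principal downset of an existing element (that creates a maximal element after rotation); the point $a$ should be a genuinely ``new'' element not realized in $\pp$, and $A,C$ must both be infinite and suitably generic so that no finite configuration gets trapped.

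My first step is to use (EXT)-style reasoning to understand exactly which one-point types over $\pp$ produce a rotation image with (EXT). I would fix the rotated order $\preceq$ given by Definition~\ref{defrot} and then verify (EXT) directly: take an arbitrary finite $Q\subseteq P$ and an arbitrary one-point extension $(Q\cup\{s\},\preceq')$ of $(Q,\preceq)$, and seek a witness $b\in P$ realizing the prescribed $\preceq$-relations to $Q$. The key move is to translate the desired $\preceq$-type of the witness over $Q$ back into a $\leq$-type over $Q$ (using how $\Ro_{A,C}$ acts on the three blocks $A,B,C$ and on incomparabilities), and then invoke (EXT) for the original random poset $\pp$ to produce an element of the correct $\leq$-type. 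The delicate part is that the $\preceq$-type we want also constrains how the witness must sit relative to the blocks $A$, $B$, $C$ themselves (i.e., its $\leq$-relation to $a$), so the $\leq$-type I hand to (EXT) must simultaneously encode both the relations to $Q$ and a compatible placement in one of $A$, $B$, $C$.

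This is where the main obstacle lies: I must choose the one-point extension defining $a$ so that \emph{every} realizable $\preceq$-type over every finite $Q$ lifts to a realizable $\leq$-type over the enlarged finite set $Q\cup\{a\}$. Concretely, given a target $\preceq$-type for the witness, I need freedom to place the witness in whichever block ($A$, $B$, or $C$) is consistent with that type, and within that block to realize the required comparabilities and incomparabilities with $Q$ while also having the correct relation to $a$. The natural way to guarantee this freedom is to let $a$ define a ``most generic'' cut: I would take the extension in which $a$ is incomparable to a rich infinite set and comparable (both above and below) to rich infinite sets as well, so that $A$, $B$, $C$ are each infinite and each contains witnesses of all finite types. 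I expect the verification to reduce to a finite case analysis over the possible blocks of the witness and the blocks of the elements of $Q$, in each case exhibiting the $\leq$-type to feed to (EXT).

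Finally, once (EXT) is verified for $\Ro_{A,C}(\pp)$, uniqueness of the countable homogeneous universal partial order gives $\Ro_{A,C}(\pp)\simeq\pp$, as required; and since $A$ and $C$ are nonempty, this is a non-trivial rotation, which, as noted in the surrounding text, separates the group of rotating permutations from $\aut(\pp)$. I would organize the write-up by first fixing the explicit extension (equivalently the triple $A,B,C$), then proving the block structure is as claimed, and then carrying out the (EXT)-verification as the technical core.
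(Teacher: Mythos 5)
Your plan is essentially the paper's proof: the paper takes an element $a\in\pp$, sets $A,B,C$ to be the elements below, incomparable to, and above $a$, deletes $a$ (leaving a copy of $\pp$ on which this is a rotation defined by a generic one-point extension, exactly your ``most generic cut''), and then verifies (EXT) for the rotated order by translating each prescribed $\preceq$-type over a finite $S$ into a $\leq$-extendible triple over $S\cup\{a\}$ and invoking the extension property of $\pp\cup\{a\}$ — precisely the block-by-block case analysis you defer to as the technical core. The one detail worth noting when you write it up is that the block ($A$, $B$, or $C$) in which the witness must be placed is in fact forced by which of $A\cap A'$ and $C\cap C'$ is nonempty, which is what organizes the paper's three cases.
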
 
\begin{proof} 
Let $a\in \pp$ and  let  $A=\{x\in \pp\,|\, x<a     \}$,   $C=\{x\in \pp\,|\, x>a
  \}$ and $B=\{x\in \pp\,|\, x\perp a \}$.  Then the rotation $\Ro_{A,C}$ is well-defined.
 Now omit $a$ from
  $\pp$. Then $\pp\setminus\{a\}$ is isomorphic to $\pp$ -- this is well-known and easy to see by verifying the extension property.
  Thus we can
  denote  $\pp\setminus\{a\}$ by  $\pp$.
Then  $A\cup B\cup C=\pp$ and  $\Ro_{A,C}$ is a rotation of $\pp$. Let
$\mathcal Q=(P, \preceq):=\Ro_{A,C}(\pp)$. We
show that $\qq$ is isomorphic to $\pp$. 

To do this, we check that (EXT) holds for $\qq$. Let $S\subseteq P$ be finite and $A',B',C'\subseteq S$ be such that $A'\prec C'$, $A'$ is a $\preceq$-downset,
$C'$ is an $\preceq$-up-set in $S=A'\cup B' \cup C'$. Then the
following  relations hold:
\begin{alignat}{3}
(C\cap C')&>&(A\cap C')\cup(B\cap B')\cup(C\cap A')\\
(B\cap C')\cup(C\cap B')&>&(A\cap B')\cup(B\cap A')\\
(A\cap C')\cup(B\cap B')\cup(C\cap A')&>&(A\cap A')
\end{alignat}
For Item~(1), $C\cap C'>A\cap C'$ holds because $A<C$.
In $\mathcal Q$  we have $A'\prec C'$, and $\Ro_{A,C}$  does not change
the relations inside $C$, hence  $C\cap C'> C\cap A'$. For $c\in
C$ and $b\in B$ either $b<c$ or $b\perp_< C$. Thus either
$b\perp_\prec c$ or  $b\succ c$. In $\mathcal Q$ for any  $c\in
C'$ and $b\in B'$ either $b\prec c$ or $b\perp_\prec c$. Thus for any 
      $b\in B\cap B'$ and  $c\in C\cap C'$ we must have
$b<c$. Hence (1) holds.
Item (3) is proved dually. 

For Item~(2), $C\cap B'>A\cap B'$ because $A<C$, and
$(B\cap C')>(B\cap A')$ holds because $A'\prec C'$.
 For $y\in
A$ and $x\in B$ either $y<x$ or $y\perp_< x$. Thus either
$y\perp_\prec x$ or  $x\prec y$.
 In $\mathcal Q$ for any  $x\in
C'$ and $y\in B'$ either $y\prec x$ or $y\perp_\prec x$. Thus for any 
      $x\in B\cap C'$ and  $y\in A\cap B'$ we must have $x>y$, so
$B\cap C' >A\cap B'$. The case $C\cap B'> B\cap A'$ is proved
similarly.

At first, let $C\cap C'\ne\emptyset$. Now, $C'$ is an up-set in $S$ with respect to $\preceq$.
As $C>A$ in $\pp$ we have $C\prec A$ in $\qq$. Thus $c\prec A$ for any
element $c\in C\cap C'$, so $C'\supseteq A$ in $\qq$. The subsets $A',B',C'$ 
are disjoint, hence  $A\cap B'=\emptyset$ and  $A\cap A'=\emptyset$.
  As $C $ is a $\prec$-downset and  $c\succ A'$ for any
element $c\in C\cap C'$, we have    $A'\subset
C$. Thus, as    the subsets $A,B,C$ are disjoint, $A\cap A'=B\cap
A'=\emptyset$,   as well. Now let

\begin{alignat*}{3}
X_0=&(A\cap A')\cup(B\cap C')\cup(C\cap B')&=&(B\cap C')\cup(C\cap B'),\\
X_1=&(A\cap B')\cup(B\cap A')\cup(C\cap C')&=&(C\cap C'),\\
X_2=&(A\cap C')\cup(B\cap B')\cup(C\cap A').
\end{alignat*}

We claim that ${X_2,X_0,X_1}$ is an extendible triple with respect to the order $\leq$, and there is a
  $q\in C$ extending it. Moreover, $q$ extends $S$ in $\qq$.
At first, as $C$ and $C'$ are up-sets in $\pp$ and $(S,\preceq)$,
respectively, $X_1$ is an up-set in $(S,\leq)$. Secondly, we show that
$X_2$ is a downset in $(S,\leq)$. For this we need to show that no element of $X_2$
is above an element of $X_0$.

As $A,B,C$ is an extendible  triple in $\pp$,
considering the relationships of the elements in $\pp$ we have that 
$x\not>y$, if

\begin{itemize}

\item   $x\in (A\cap C')$ and $y\in (C\cap B')$, 
\item $x\in (B\cap B')$ and $y\in (C\cap B')$,
\item  $x\in (A\cap C')$ and $y\in (B\cap C')$.
\end{itemize}
If $x$ and $y$ are both in $A,B$ or $C$, then their relationship is
the same in $\pp$ and $\qq$. Hence $x\not>y$, if
\begin{itemize}
\item  $x\in (B\cap B')$ and $y\in (B\cap C')$,
\item   $x\in (C\cap A')$ and $y\in (C\cap B')$.
\end{itemize}
Finally, let 
\begin{itemize}
\item  $x\in (C\cap A')$ and $y\in (B\cap C')$.
\end{itemize}
Then $x\prec y$ and $x>y$ or $x\perp y$. Hence, by the definition of
the rotation $x\perp y$ holds. Thus $X_2$ is a downset in $(S,\leq)$ and  ${X_2,X_0,X_1}$ is an extendible triple with respect to $\leq$.

Now we show that  ${X_2,X_0,X_1}$ can be extended in $C$. Let us
include again the element $a$ that we have omitted from $\pp$. Now,
$\pp \cup \{a\}$ is isomorphic to $\pp$  and
 $X_2\cup \{a\}$ is a downset in $(S,\leq)$, $X_1$ is an up-set in $(S,\leq)$  and $X_2\cup\{a\}<X_1$.
As $\pp\cup\{a\}$
is the random poset, there exists a point $q\in \pp$ satifying the conditions
$X_2\cup\{a\}<q,X_0\perp q,X_1>q$. Hence, $q$ is in $C$ and $q$
extends  ${X_2,X_0,X_1}$ 
in $\pp$. 

We need to show that  $q$ extends $S$ in $\qq$.  The
relationship of $q$ to the elements of $C$ is not altered, hence they remain the same in $\Q$. If $x\in (B\cap
B')$, then $x>q$ turns to $x\perp_\prec q$.  If $x\in (A\cap
C')$, then $x<q$ turns to $x\succ q$. Finally, if $x\in (B\cap
C')$, then $x\perp q$ turns to $x\succ q$, hence  $q$  extends $S$ in $\qq$.

The dual case when $A\cap A'\ne \emptyset$ can be handled similary. In
this case

\begin{alignat*}{3}
X_0=&(A\cap A')\cup(B\cap C')\cup(C\cap B')&=&(A\cap A') ,\\
X_1=&(A\cap B')\cup(B\cap A')\cup(C\cap C')&=&(A\cap B')\cup(B\cap A')\\
X_2=&(A\cap C')\cup(B\cap B')\cup(C\cap A')&
\end{alignat*}
 ${X_0,X_1,X_2}$ will be the extendible triple with respect to $\leq$, and
we can find a  point $q\in A$ such that  $q$  extends $S$ in $\qq$.

Finally, assume that $A\cap A'=C\cap C'=\emptyset$. In this case

\begin{alignat*}{3}
X_0=&(A\cap A')\cup(B\cap C')\cup(C\cap B')&=&(B\cap C')\cup(C\cap B') ,\\
X_1=&(A\cap B')\cup(B\cap A')\cup(C\cap C')&=&(A\cap B')\cup(B\cap A')\\
X_2=&(A\cap C')\cup(B\cap B')\cup(C\cap A').&
\end{alignat*}
 We are going to prove that
 ${X_1,X_2,X_0}$  is an extendible triple with respect to $\leq$, and
we can find a  point $q\in B$ such that  $q$  extends $S$ in $\qq$.
The arguments from the first two cases imply that $X_0$ is an up-set
and $X_1$ is a downset in $(S,\leq)$.
Consider  $\pp\cup\{a\}$, as before. The triple
$X_1,X_2\cup\{a\},X_0$ is an extendible triple with respect to $\leq$. Hence, there exists
$q\in B$ extending it in $\pp$. Now,   $q$ extends $S$
in $\qq$. It can be checked similarly to the previous  cases:  The
relationship of $q$ to the elements of $B$ is not altered at
$q$, hence they remain the same in $\Q$. 
 If $x\in (A\cap
B')$, then $x<q$ turns to $x\perp_\prec q$.  If $x\in (C\cap
B')$, then $x>q$ turns to $x\perp_\prec q$. Finally, if $x\in (A\cap
C')$, then $x\perp q$ turns to $x\succ q$, and if  $x\in (C\cap
A')$, then $x\perp q$ turns to $x\succ q$. Hence  $q$  extends $S$ in $\qq$.

We obtained that (EXT) holds for $\qq$. Therefore $\qq$ is isomorphic
to the random poset. 
\end{proof}

We now show that any rotation of a finite poset can be interpreted as the restriction of a rotating permutation of $\pp$, showing that our notion of rotation of a finite poset really is the analogue of the Seidel-switch for posets.

\begin{proposition}\label{prop:finiterotationsarerestrictions}
Let $(X,\leq)$ be a finite poset, and let $\Ro_{U,V}$ be a rotation of $(X,\leq)$. Let $e_1: (X,\leq) \to \pp$ and $e_2: \Ro_{U,V}(X,\leq) \to \pp$ be embeddings. Then there exists a rotating permutation $\alpha$ of $\pp$ such that $e_2=\alpha\circ e_1$.
\end{proposition}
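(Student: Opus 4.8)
The plan is to realize the given rotation of the finite poset as a concrete rotation of $\pp$, built with the one-point-extension technique of Theorem~\ref{thm:constr}, and then to promote the resulting isomorphism to a rotating permutation using homogeneity of $\pp$. First I would transport the rotation to the image of $e_1$. Write $Y:=e_1(X)$, $U':=e_1(U)$, $V':=e_1(V)$ and $B':=e_1(X\setminus(U\cup V))$. Since $e_1$ is an isomorphism onto $(Y,\le)$, the triple $U',B',V'$ is extendible in $(Y,\le)$, and $e_1$ is simultaneously an isomorphism from $\Ro_{U,V}(X,\le)$ onto the rotated poset $\Ro_{U',V'}(Y)$. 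By Proposition~\ref{rotext} this extendible triple is described by a one-point extension of $(Y,\le)$ adding a point above $U'$, below $V'$ and incomparable to $B'$. Applying (EXT) to the finite set $Y$, I obtain a point $\hat a\in P$ realizing exactly these comparabilities; note $\hat a\notin Y$, since no element of $Y$ can satisfy its prescribed relation to itself.

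Next I would set $A:=\{x\in P\setminus\{\hat a\}\mid x<\hat a\}$, $C:=\{x\in P\setminus\{\hat a\}\mid x>\hat a\}$ and $B:=\{x\in P\setminus\{\hat a\}\mid x\perp\hat a\}$, an extendible triple of $\pp^{\ast}:=(P\setminus\{\hat a\},\le)$. By the choice of $\hat a$ we have $A\cap Y=U'$, $C\cap Y=V'$ and $B\cap Y=B'$, so the rotation $\Ro_{A,C}$ of $\pp^{\ast}$ restricts on $Y$ to $\Ro_{U',V'}(Y)$. Now Theorem~\ref{thm:constr} applies verbatim: as $A,B,C$ is exactly the triple cut out by the omitted point $\hat a$, the rotated poset $\Ro_{A,C}(\pp^{\ast})=(P\setminus\{\hat a\},\preceq)$ is again isomorphic to $\pp$. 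Writing $Z:=e_2(X)$ and combining the two transported isomorphisms, $f:=e_2\circ e_1^{-1}$ is an isomorphism from $(Y,\preceq)=\Ro_{U',V'}(Y)$ onto $(Z,\le)$; that is, $f$ is a finite partial isomorphism from $\Ro_{A,C}(\pp^{\ast})$ to $\pp$.

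Because both $\pp^{\ast}$ and $\Ro_{A,C}(\pp^{\ast})$ are isomorphic to the homogeneous poset $\pp$, I would extend two partial isomorphisms by back-and-forth: an isomorphism $\lambda\colon\pp\to\pp^{\ast}$ with $\lambda|_Y=\mathrm{id}_Y$ (extending the trivial partial isomorphism $\mathrm{id}_Y$), and an isomorphism $\mu\colon\Ro_{A,C}(\pp^{\ast})\to\pp$ with $\mu|_Y=f$. Define $\alpha:=\mu\circ\lambda$, a permutation of $P$. For every $x\in X$ one computes $\alpha(e_1(x))=\mu(\lambda(e_1(x)))=\mu(e_1(x))=f(e_1(x))=e_2(x)$, so $e_2=\alpha\circ e_1$, as required. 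It remains to verify that $\alpha$ is rotating. Transporting the order $\preceq$ back along $\lambda$ turns $\Ro_{A,C}$ into a rotation $\Ro_{\tilde A,\tilde C}$ of the full poset $\pp$, where $\tilde A=\lambda^{-1}(A)$ and $\tilde C=\lambda^{-1}(C)$, and a direct computation shows that $\alpha$ is an isomorphism from $\Ro_{\tilde A,\tilde C}(\pp)$ onto $\pp$. Thus $\alpha^{-1}$ witnesses that a rotation of $\pp$ is isomorphic to $\pp$, and since the rotating permutations form a group (group (3) of Theorem~\ref{reductok}), $\alpha$ itself is rotating.

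I expect the main hurdle to be precisely the bookkeeping around the omitted point $\hat a$. The ``centre'' of the full-domain rotation $\Ro_{\tilde A,\tilde C}$ is the external point $\lambda^{-1}(\hat a)$, which does not belong to $P$; this is what allows $\Ro_{\tilde A,\tilde C}(\pp)\cong\pp$, in contrast to a rotation about an actual point of $\pp$, which would create an isolated (hence forbidden) element. Making sure that Theorem~\ref{thm:constr} is imported correctly for the triple obtained from $\hat a$, and that the two homogeneity extensions $\lambda,\mu$ combine into a genuine member of the rotating group, is the only delicate part; the transport of the rotation from $X$ to $Y$ and the verification that $f$ is a partial isomorphism are routine.
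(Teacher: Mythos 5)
Your proof is correct and follows essentially the same route as the paper's: both choose, via (EXT), a point $\hat a$ whose down-set, up-set and incomparability set trace out $e_1[U]$, $e_1[V]$ and the image of the middle part, invoke Theorem~\ref{thm:constr} to get $\Ro_{A,C}(P\setminus\{\hat a\},\leq)\cong\pp$, and then use homogeneity of $\pp$ to match the two embeddings. The only cosmetic difference is that the paper assembles $\alpha$ as a composition $\delta\circ\gamma\circ\varepsilon$ of a fixed rotating permutation with two correcting automorphisms, whereas you build it directly as $\mu\circ\lambda$ from two back-and-forth extensions of prescribed finite partial isomorphisms and then check by hand that it witnesses the rotation $\Ro_{\lambda^{-1}[A],\lambda^{-1}[C]}$ of $\pp$.
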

\begin{proof}
Set $A':=e_1[U]$, $C':=e_1[V]$, and $B':=e_1[X]\setminus A'\cup C'$. By the extension property of $\pp$, there exists $a\in P$ such that $A'< a$, $a< C'$, and $a\perp B'$. 
 Set $A=\{x\in \pp\,|\, x<a     \}$,   $C=\{x\in \pp\,|\, x>a
  \}$ and $B=\{x\in \pp\,|\, x\perp a \}$. By the proof of Theorem~\ref{thm:constr}, $(P\setminus\{a\},\leq)$ and $\Ro_{A,C}(P\setminus\{a\},\leq)$ are isomorphic; let $\beta$ be a rotating permutation of $(P\setminus\{a\},\leq)$ witnessing this. Pick any isomorphism $i: \pp\to (P\setminus\{a\},\leq)$. Then $\gamma:=i^{-1}\circ \beta\circ i$ is a rotating permutation of $\pp$ for the rotation $\Ro_{i^{-1}[A],i^{-1}[C]}$. Since $A'\subseteq A$, $C'\subseteq C$, and $B'\subseteq B$, we have that $i^{-1}\circ \beta\circ e_1$ changes the relations between the elements of $(X,\leq)$ just like the rotation $\Ro_{U,V}$. Thus, by the homogeneity of $\pp$, there exists $\delta\in\aut(\pp)$ such that $\delta\circ i^{-1}\circ \beta\circ e_1=e_2$. Again by the homogeneity of $\pp$, there exists $\varepsilon\in\aut(\pp)$ such that $i^{-1}\circ e_1=\varepsilon\circ e_1$. Set $\alpha:=\delta\circ\gamma\circ \varepsilon$. Composed of rotating permutations and automorphisms of $\pp$, $\alpha$ is itself a rotating permutation of $\pp$. Moreover, $\alpha\circ e_1=\delta\circ\gamma\circ \varepsilon\circ e_1= \delta\circ i^{-1}\circ \beta\circ i\circ i^{-1}\circ e_1=e_2$, proving the proposition.
\end{proof}

The remainder of this paper is devoted to showing that the rotating permutations of $\pp$ can be described as the automorphisms of 
a homogeneous structure with one ternary relation. In the following, for $i\in\{1,2,3\}$ we identify $\mathcal{O}_i$ with the ternary relation on $\pp$ which consists of all triples in $\pp$ which induce a poset isomorphic with a poset in $\mathcal{O}_i$. The following has already been observed in~\cite{ppppsz}, but here we provide a much shorter proof which draws on our results about finite rotations.

\begin{proposition}
The rotating permutations of $\pp$ are precisely the automorphisms of the structure $(P,\mathcal{O}_1,\mathcal{O}_2,\mathcal{O}_3)$.
\end{proposition}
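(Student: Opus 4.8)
The plan is to prove the two inclusions separately, namely that every rotating permutation preserves the relations $\mathcal{O}_1,\mathcal{O}_2,\mathcal{O}_3$, and conversely that every automorphism of the structure $(P,\mathcal{O}_1,\mathcal{O}_2,\mathcal{O}_3)$ is a rotating permutation of $\pp$.

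For the first inclusion, let $\alpha$ be a rotating permutation of $\pp$, so that $\alpha$ witnesses that some rotation $\Ro_{A,C}$ sends $(P,\leq)$ to a poset isomorphic to $\pp$. I would argue that the key point is purely local: whether or not a given triple $\{x,y,z\}$ lies in $\mathcal{O}_i$ depends only on the restriction of $\leq$ to $\{x,y,z\}$, and a rotation restricted to any three-element subset is again a rotation of that subset. Since rotations preserve the three rotation classes of three-element posets by construction (the classes $\mathcal{O}_1,\mathcal{O}_2,\mathcal{O}_3$ are precisely the rotation-equivalence classes of the three-element posets, as listed before Proposition~\ref{thm:maxdet}), the image under $\alpha$ of any triple lies in the same class $\mathcal{O}_i$. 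Hence $\alpha$ preserves each $\mathcal{O}_i$ and is an automorphism of the ternary structure.

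For the converse, let $\alpha$ be an automorphism of $(P,\mathcal{O}_1,\mathcal{O}_2,\mathcal{O}_3)$. The goal is to show that $\alpha$ is rotating, i.e.\ that it witnesses a rotation from $(P,\leq)$ to an isomorphic copy of $\pp$. Writing $\preceq$ for the order obtained by transporting $\leq$ along $\alpha$ (so that $\alpha$ is by definition an isomorphism from $(P,\leq)$ to $(P,\preceq)$), the hypothesis on $\alpha$ says exactly that $(\{x,y,z\},\leq)$ and $(\{x,y,z\},\preceq)$ are rotation-equivalent for all $x,y,z\in P$: this is condition~(3) of Theorem~\ref{cor:roteq}. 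By Proposition~\ref{prop:koenig}, Theorem~\ref{cor:roteq} extends to infinite posets, so condition~(3) implies condition~(2), namely that there is a single rotation $\Ro_{A,C}$ mapping $(P,\leq)$ to $(P,\preceq)$. Since $(P,\preceq)$ is isomorphic to $(P,\leq)=\pp$ via $\alpha$, this rotation sends $\pp$ to an isomorphic copy, and the composition of $\alpha$ with the rotating permutation realizing $\Ro_{A,C}$ shows $\alpha$ itself is rotating; more directly, $\alpha$ is an isomorphism $(P,\leq)\to\Ro_{A,C}(P,\leq)$ and hence rotating by definition.

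The main obstacle I expect is the infinite direction of the second inclusion: the naive statement ``preserving the $\mathcal{O}_i$ implies being a single rotation'' is exactly the equivalence of (2) and (3) in Theorem~\ref{cor:roteq}, which was only proved for finite posets by a direct argument using maximal elements (Proposition~\ref{thm:maxdet}), and the remark after Proposition~\ref{thm:maxdet} warns that the maximal-element characterization genuinely fails in the infinite case. The essential device that rescues the argument is therefore the compactness/K\"onig's lemma passage of Proposition~\ref{prop:koenig}, which upgrades the finite result to arbitrary posets and in particular to $\pp$; I would make sure to invoke it explicitly rather than attempting to rerun the maximal-element argument. The first inclusion and the translation between ``rotating permutation'' and ``condition~(3)'' are essentially bookkeeping once one observes that membership in $\mathcal{O}_i$ is a three-element-local, rotation-invariant property.
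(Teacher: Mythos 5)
Your proposal is correct and follows essentially the same route as the paper: the forward inclusion by observing that rotations restrict to rotations of three-element subsets and hence preserve the classes $\mathcal{O}_i$, and the converse by transporting $\leq$ along $\alpha$ to get condition~(3) of Theorem~\ref{cor:roteq} and invoking Proposition~\ref{prop:koenig} to obtain a single rotation witnessed by $\alpha$. Your closing remark correctly identifies Proposition~\ref{prop:koenig} as the essential ingredient that makes the infinite case go through.
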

\begin{proof}
Clearly, if $\alpha$ is a rotating permutation of $\pp$, then $(\{a,b,c\},\leq)$ and $(\{\alpha(a),\alpha(b),\alpha(c)\},\leq)$ are rotation-equivalent for all $a,b,c\in P$, and hence $\alpha$ is an automorphism of $(P,\mathcal{O}_1,\mathcal{O}_2,\mathcal{O}_3)$. Conversely, if a permutation $\alpha$ has the latter property, then setting $\alpha(x)\preceq \alpha(y)$ if and only if $x\leq y$ we get that $(P,\preceq)$ and $\pp$ satisfy condition~(3) of Theorem~\ref{cor:roteq}; hence, by Proposition~\ref{prop:koenig} there is a rotation $\Ro$ such that $\Ro(\pp)=(P,\preceq)$. By the definition of $\preceq$, the permutation $\alpha$ is an isomorphism from $\pp$ to $(P,\preceq)$, and hence it is a rotating permutation with respect to the rotation $\Ro$.
\end{proof}

As another application of our results, we shall see that $(P,\mathcal{O}_1,\mathcal{O}_2,\mathcal{O}_3)$ is homogeneous. This is a very strong property for a structure to have and implies many nice other properties. Among these is \emph{quantifier elminination}, i.e., every first-order formula over a homogeneous  structure is equivalent to a formula without quantifiers. We remark moreover that although $\pp$ is homogeneous and $(P,\mathcal{O}_1,\mathcal{O}_2,\mathcal{O}_3)$ has a first-order definition in $\pp$, it does not automatically follow that $(P,\mathcal{O}_1,\mathcal{O}_2,\mathcal{O}_3)$ is homogenous itself or first-order interdefinable with a homogeneous structure in a finite language: there exist counterexamples in similar situations (see the introduction of~\cite{thomas}).

\begin{theorem}\label{thm:homo}
$(P,\mathcal{O}_1,\mathcal{O}_2,\mathcal{O}_3)$ is homogeneous.
\end{theorem}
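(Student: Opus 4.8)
The plan is to show that the structure $\mathcal{M}:=(P,\oo_1,\oo_2,\oo_3)$ is homogeneous by directly verifying that every isomorphism between finite induced substructures extends to an automorphism. So let $f$ be an isomorphism between two finite substructures of $\mathcal{M}$ with domains $X$ and $Y$; that is, $f$ is a bijection $X\to Y$ which preserves the ternary relations $\oo_1,\oo_2,\oo_3$. I want to produce an automorphism $\alpha$ of $\mathcal{M}$ extending $f$. Since by the preceding proposition the automorphisms of $\mathcal{M}$ are exactly the rotating permutations of $\pp$, it suffices to build a rotating permutation of $\pp$ restricting to $f$ on $X$; and here Proposition~\ref{prop:finiterotationsarerestrictions} is the natural tool.

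The key step is to recognize that the fact that $f$ preserves the $\oo_i$ means precisely that, for every triple $a,b,c\in X$, the induced posets $(\{a,b,c\},\leq)$ and $(\{f(a),f(b),f(c)\},\leq)$ are rotation-equivalent. Transport the order $\leq$ from $Y$ back to $X$ via $f$: define a partial order $\preceq$ on $X$ by setting $x\preceq x'$ iff $f(x)\leq f(x')$. Then $f:(X,\preceq)\to (Y,\leq)$ is an order isomorphism by construction, and the relation-preservation of $f$ gives exactly that $(\{a,b,c\},\leq)$ and $(\{a,b,c\},\preceq)$ are rotation-equivalent for all $a,b,c\in X$. By Theorem~\ref{cor:roteq} (condition~(3) implies~(2)) applied to the two finite posets $(X,\leq)$ and $(X,\preceq)$, there is a single rotation $\Ro_{U,V}$ with $\Ro_{U,V}(X,\leq)=(X,\preceq)$.

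Now I would invoke Proposition~\ref{prop:finiterotationsarerestrictions} to lift this finite rotation to $\pp$. Take the inclusion $e_1:(X,\leq)\hookrightarrow\pp$, and take $e_2:\Ro_{U,V}(X,\leq)=(X,\preceq)\to\pp$ to be the composite of $f:(X,\preceq)\to(Y,\leq)$ with the inclusion $(Y,\leq)\hookrightarrow\pp$, which is an embedding since $f$ is an order isomorphism onto $(Y,\leq)$. The proposition then yields a rotating permutation $\alpha$ of $\pp$ with $e_2=\alpha\circ e_1$. For $x\in X$ this reads $\alpha(x)=e_2(x)=f(x)$, so $\alpha$ extends $f$. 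Since $\alpha$ is a rotating permutation of $\pp$, the previous proposition tells us it is an automorphism of $\mathcal{M}$, and it extends $f$, which is what homogeneity demands.

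The main obstacle is purely bookkeeping: one must check carefully that transporting $\leq$ through $f$ to obtain $\preceq$ is well-defined as a partial order (immediate, as it is an isomorphic copy via a bijection) and, more importantly, that preservation of the three $\oo_i$ is genuinely equivalent to pairwise rotation-equivalence of the three-element induced suborders — this is just the definition of the relations $\oo_i$ as the rotation classes on triples, so it is automatic. The one point deserving care is the application of Proposition~\ref{prop:finiterotationsarerestrictions}: its statement fixes embeddings $e_1,e_2$ of the two posets and produces $\alpha$ with $e_2=\alpha\circ e_1$, so I must make sure to feed it $e_2$ that already encodes $f$. Once the embeddings are set up as above, no delicate estimate remains, and the theorem follows.
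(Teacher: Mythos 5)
Your argument is correct, and it is organized differently from the paper's. The paper fixes $s\in S$, uses Lemma~\ref{max} to rotate $(S,\leq)$ and $(T,\leq)$ separately so that $s$ and $i(s)$ become the unique maximal elements, applies Proposition~\ref{prop:finiterotationsarerestrictions} twice to lift both finite rotations to rotating permutations $\alpha,\beta$ of $\pp$, and then invokes Proposition~\ref{thm:maxdet} (maximal elements together with the rotation classes of triples determine the order) to see that the conjugated map $v\circ i\circ u^{-1}$ is an order isomorphism, which the homogeneity of $\pp$ extends to some $\gamma$; the extension is $\beta\circ\gamma\circ\alpha^{-1}$. You instead pull the order back along $f$ to get $(X,\preceq)$, apply Theorem~\ref{cor:roteq} ((3)$\Rightarrow$(2)) to obtain a \emph{single} rotation of $(X,\leq)$ whose image is $(X,\preceq)$, and then apply Proposition~\ref{prop:finiterotationsarerestrictions} once, choosing $e_2$ so that it already encodes $f$; the permutation it returns is the desired extension. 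Your route is shorter and needs only one lifting step and no separate appeal to the homogeneity of $\pp$, because Theorem~\ref{cor:roteq} packages the combinatorial content (it is itself proved via Lemma~\ref{max} and Proposition~\ref{thm:maxdet}); the paper's version unfolds that machinery explicitly, which makes the role of the maximal elements visible but costs an extra composition. Both proofs rest on the same underlying facts, and both require the (easy but necessary) observation, which you state correctly, that preservation of $\mathcal{O}_1,\mathcal{O}_2,\mathcal{O}_3$ on triples is exactly rotation-equivalence of the induced three-element posets.
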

\begin{proof}
Let $S,T\subseteq P$ be finite and let $i:S\to T$ be a partial isomorphism between the structures induced by these sets in $(P,\mathcal{O}_1,\mathcal{O}_3,\mathcal{O}_2)$. Pick any $s\in S$ and set $t:=i(s)$. Let $\Ro$ and $\Ro'$ be rotations of $(S,\leq)$ and $(T,\leq)$, respectively, with the property that $s$ and $t$ are the unique maximal elements of $\Ro(S,\leq)$ and $\Ro'(T,\leq)$. Let $u:\Ro(S,\leq)\to\pp$ and $v:\Ro'(T,\leq)\to\pp$ be embeddings. Then by Proposition~\ref{prop:finiterotationsarerestrictions} there exist rotating permutations $\alpha, \beta$ of $\pp$ whose respective restrictions to $S$ and $T$ are equal to $u$ and $v$. 
Now set $i':=v \circ i\circ u^{-1}$. Then $i'$ preserves $\mathcal{O}_1, \mathcal{O}_2, \mathcal{O}_3$, and sends the unique maximal element of $(u[S],\leq)$ to the unique maximal element of $(v[S],\leq)$. Thus, by Proposition~\ref{thm:maxdet}, it is an isomorphism between these posets, which by the homogeneity of $\pp$ extends to an automorphism $\gamma$ of $\pp$. Hence, $\beta\circ \gamma\circ \alpha^{-1}$ is an extension of $i$ to an automorphism of $(P,\mathcal{O}_1,\mathcal{O}_3,\mathcal{O}_2)$.
\end{proof}

Observe that $\mathcal{O}_3$ can by defined from $\mathcal{O}_2$ by  $(a,b,c)\in \mathcal{O}_3 \leftrightarrow (c,b,a)\in \mathcal{O}_2
$. Moreover, $\mathcal{O}_1$ can be defined from $\mathcal{O}_2$ since it is the complement of $\mathcal{O}_2\cup \mathcal{O}_3$ in $P^3$. Hence, the automorphism groups of the structures $(P,\mathcal{O}_2)$, $(P,\mathcal{O}_3)$, and $(P,\mathcal{O}_1,\mathcal{O}_2,\mathcal{O}_3)$ are all identical: they consist of the rotating permutations.

\begin{corollary}
$(P,\mathcal{O}_2)$ and $(P,\mathcal{O}_3)$ are homogeneous, and the automorphisms of any of these structures are precisely the rotating permutations of $\pp$.
\end{corollary}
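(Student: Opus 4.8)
The plan is to leverage the quantifier-free interdefinability of $\mathcal{O}_1$, $\mathcal{O}_2$, and $\mathcal{O}_3$ recorded in the observation immediately preceding the corollary, together with the homogeneity of the full structure established in Theorem~\ref{thm:homo}. The claim about automorphism groups is essentially already contained in that observation: since $(a,b,c)\in\mathcal{O}_3$ iff $(c,b,a)\in\mathcal{O}_2$, and $\mathcal{O}_1$ is the complement of $\mathcal{O}_2\cup\mathcal{O}_3$ in $P^3$, any permutation of $P$ preserving $\mathcal{O}_2$ must preserve all three relations, and conversely. Hence $\aut(P,\mathcal{O}_2)=\aut(P,\mathcal{O}_3)=\aut(P,\mathcal{O}_1,\mathcal{O}_2,\mathcal{O}_3)$, and by the proposition identifying the latter group with the rotating permutations of $\pp$, all three coincide with the rotating permutations. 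I would simply restate this to settle the second half of the corollary.

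For homogeneity of $(P,\mathcal{O}_2)$ I would use the same interdefinability to reduce to Theorem~\ref{thm:homo}. Let $S,T\subseteq P$ be finite and let $i\colon S\to T$ be an isomorphism between the substructures they induce in $(P,\mathcal{O}_2)$. The decisive point is that the defining equivalences for $\mathcal{O}_1$ and $\mathcal{O}_3$ from $\mathcal{O}_2$ are quantifier-free and mention only the three points in question; therefore they hold within every finite induced substructure. Consequently $i$ also preserves $\mathcal{O}_3$ (since it preserves $\mathcal{O}_2$ read in the reversed order) and $\mathcal{O}_1$ (being the complement of the other two), so $i$ is in fact a partial isomorphism of $(P,\mathcal{O}_1,\mathcal{O}_2,\mathcal{O}_3)$. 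By Theorem~\ref{thm:homo}, $i$ then extends to an automorphism $\alpha$ of $(P,\mathcal{O}_1,\mathcal{O}_2,\mathcal{O}_3)$; this $\alpha$ preserves $\mathcal{O}_2$ in particular, and so is an automorphism of $(P,\mathcal{O}_2)$ extending $i$. This proves homogeneity of $(P,\mathcal{O}_2)$, and the identical argument with $\mathcal{O}_3$ in place of $\mathcal{O}_2$ disposes of the remaining case.

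I do not expect a genuine obstacle, as this is a direct corollary of Theorem~\ref{thm:homo}. The only step that deserves to be stated explicitly — and which carries the whole argument — is the remark that preserving $\mathcal{O}_2$ alone on a finite set already forces preservation of $\mathcal{O}_1$ and $\mathcal{O}_3$ on that set. This holds precisely because those two relations are defined from $\mathcal{O}_2$ without quantifiers, so their validity on a triple is a local property of the triple rather than something depending on the ambient structure; it is this locality that lets a partial isomorphism for the single-relation reduct be promoted to a partial isomorphism for the full structure before invoking homogeneity.
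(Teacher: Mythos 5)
Your proposal is correct and follows essentially the same route as the paper: both arguments rest on the observation that $\mathcal{O}_1$ and $\mathcal{O}_3$ are quantifier-free definable from $\mathcal{O}_2$, so a partial isomorphism of the reduct is automatically a partial isomorphism of the full structure, which then extends via Theorem~\ref{thm:homo}. The identification of the automorphism groups via the same interdefinability matches the remark preceding the corollary in the paper.
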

\begin{proof}
We show homogeneity for $(P,\mathcal{O}_2)$; the argument for $(P,\mathcal{O}_3)$ is identical. Observe that the definition of $\mathcal{O}_3$  from $\mathcal{O}_2$ given above do not use quantifiers; in other words, for any finite $S\subseteq P$ we have that the triples of elements in $S$ which are elements of $\mathcal{O}_2$ determine those triples which are elements of $\mathcal{O}_3$. Hence, any partial isomorphism between finite induced substructures of $(P,\mathcal{O}_2)$ is a partial isomorphism between the structures induced in $(P,\mathcal{O}_2,\mathcal{O}_3)$. For the same reason we have that any partial isomorphism between finite induced substructures of $(P,\mathcal{O}_2)$ is a partial isomorphism between the structures induced in $(P,\mathcal{O}_1,\mathcal{O}_2,\mathcal{O}_3)$. By Theorem~\ref{thm:homo}, any such partial isomorphism extends to an automorphism of $(P,\mathcal{O}_1,\mathcal{O}_2,\mathcal{O}_3)$, which is also an automorphism of $(P,\mathcal{O}_2)$.
\end{proof}

\end{document}